\documentclass{amsart}
\usepackage{mathrsfs,latexsym,amsfonts,amssymb}

\newtheorem{theorem}{Theorem}[section]
\newtheorem{lemma}[theorem]{Lemma}
\newtheorem{corollary}[theorem]{Corollary}

\theoremstyle{definition}
\newtheorem{definition}[theorem]{Definition}
\newtheorem{proposition}[theorem]{Proposition}
\theoremstyle{remark}

\begin{document}

\title[A note on free paratopological groups]
{A note on free paratopological groups}

\author{Fucai Lin}
\address{Fucai Lin: Department of Mathematics and Information Science,
Zhangzhou Normal University, Zhangzhou 363000, P. R. China}
\email{linfucai2008@yahoo.com.cn}

\thanks{Supported by the NSFC (No. 10971185, No. 11201414) and the Natural Science Foundation of Fujian Province (No. 2012J05013) of China.}

\keywords{Free paratopological groups; $P$-spaces; direct limit property; quasi-pseudometric; quasi-uniformity; quasi-order sets.}%insert keywords
\subjclass[2000]{22A05; 54H11; 54A25; 54D30; 54D45}%insert subject class

%\date{\today}
\begin{abstract}
In this paper, we mainly discuss some generalized metric properties and the character of the free paratopological groups, and extend several results valid
for free topological groups to free  paratopological groups.
\end{abstract}

\maketitle

\section{Introduction}
In 1941, free topological groups were introduced by A.A. Markov in \cite{MA} with the clear idea of extending the well-known construction of a free group from group theory to topological groups. Now, free topological groups have become a powerful tool of study in the theory of topological groups and serve as a source of various examples and as an instrument for proving new theorems, see \cite{A2008, GM, LF, NP}.

It is well known that paratopological groups are
good generalizations of topological groups, see e.g. \cite{A2008}. The Sorgenfrey line
(\cite[Example 1.2.2]{E1989}) with the usual addition is a
first-countable paratopological group but not a topological group. The absence of continuity of inversion,
the typical situation in paratopological groups, makes the study in this area
very different from that in topological groups. Paratopological groups attract
a growing attention of many mathematicians and articles in recent years, see \cite{AR2005, CJ, LF1, LF2, LF, LC, LL2010}.
As in free topological groups, S. Romaguera, M. Sanchis and M.G. Tkackenko in \cite{RS} define free paratopological groups. Recently, N.M. Pyrch has investigated some properties of free paratopological groups, see \cite{PN1, PN, PN2}. In this paper, we will discuss some generalized metric properties and the character of the free paratopological groups, and extend several results valid
for free topological groups to free  paratopological groups.

In section 3, we show,
for example, that the groups $FP(X)$ and $AP(X)$ are $P$-spaces if and only if $X$ is a $P$-space, that if $X$ is a $P$-space then the groups $FP(X)$ and $AP(X)$ have the direct limit property, that if $Y$ is a closed subspace of a Tychonoff space $X$ then the subgroup $PG(Y, X)$ of $PG(X)$ generated by $Y$ is closed in $PG(X)$. In section 4, we mainly discuss the character of free abelian paratopological groups, and extend several results valid
for free abelian topological groups to free abelian paratopological groups.
\maketitle

\section{Preliminaries}
All spaces are $T_0$ unless stated otherwise. We denote by $\mathbb{N}$ the set of all natural
numbers. The letter $e$
denotes the neutral element of a group. Readers may consult
\cite{A2008, E1989, Gr1984} for notations and terminology not
explicitly given here.

Firstly, we introduce some notions and terminology.

Recall that a {\it topological group} $G$ is a group $G$ with a
(Hausdorff) topology such that the product mapping of $G \times G$ into
$G$ is jointly continuous and the inverse mapping of $G$ onto itself
associating $x^{-1}$ with an arbitrary $x\in G$ is continuous. A {\it
paratopological group} $G$ is a group $G$ with a topology such that
the product mapping of $G \times G$ into $G$ is jointly continuous.

\begin{definition}\cite{MA}
Let $X$ be a subspace of a topological group $G$. Assume that
\begin{enumerate}
\item The set $X$ generates $G$ algebraically, that is $<X>=G$;

\item  Each continuous mapping $f: X\rightarrow H$ to a topological group $H$ extends to a continuous homomorphism $\hat{f}: G\rightarrow H$.
\end{enumerate}
Then $G$ is called the {\it Markov free topological group on} $X$ and is denoted by $F(X)$.
\end{definition}

\begin{definition}\cite{RS}
Let $X$ be a subspace of a paratopological group $G$. Assume that
\begin{enumerate}
\item The set $X$ generates $G$ algebraically, that is $<X>=G$;

\item  Each continuous mapping $f: X\rightarrow H$ to a paratopological group $H$ extends to a continuous homomorphism $\hat{f}: G\rightarrow H$.
\end{enumerate}
Then $G$ is called the {\it Markov free paratopological group on} $X$ and is denoted by $FP(X)$.
\end{definition}

Again, if all the groups in the above definitions are Abelian, then we get the definitions of the {\it Markov free Abelian topological group} and the {\it Markov free Abelian paratopological group on} $X$ which will be denoted by $A(X)$ and $AP(X)$ respectively.

By a {\it quasi-uniform space} $(X, \mathscr{U})$ we mean the natural analog of a {\it uniform space} obtained by dropping the symmetry axiom. For each quasi-uniformity $\mathscr{U}$ the filter  $\mathscr{U}^{-1}$ consisting of the inverse relations $U^{-1}=\{(y, x): (x, y)\in U\}$ where $U\in\mathscr{U}$ is called the {\it conjugate quasi-uniformity} of $\mathscr{U}$.

We also recall that the {\it universal quasi-uniformity} $\mathscr{U}_{X}$ of a space $X$ is the finest quasi-uniformity on $X$ that induces on $X$ its original topology. Throughout this paper, if $\mathscr{U}$ is a quasi-uniformity of a space $X$ then $\mathscr{U}^{\ast}$ denotes the
smallest uniformity on $X$ that contains $\mathscr{U}$, and $\tau (\mathscr{U})$ denotes the topology of $X$ generated by $\mathscr{U}$. A quasi-uniform space $(X, \mathscr{U})$ is called {\it bicomplete} if $(X, \mathscr{U}^{\ast})$ is complete.

\begin{definition}
A {\it quasi-pseudometric} $d$ on a set $X$ is a function
from $X\times X$ into the set of non-negative real numbers such that for
$x, y, z\in X$: (a) $d(x, x)=0$ and (b) $d(x, y)\leq d(x, z)+d(z, y)$. If $d$ satisfies the additional condition (c) $d(x, y)=0\Leftrightarrow x=y$, then $d$ is called a {\it quasi-metric} on $X$.
\end{definition}

Every quasi-pseudometric $d$ on $X$ generates a topology $\mathscr{F}(d)$ on $X$ which has as a
base the family of $d$-balls $\{B_{d}(x, r): x\in X, r>0\}$, where $B_{d}(x, r)=\{y\in X:
d(x, y)< r\}$.

A topological space $(X, \mathscr{F})$ is called {\it quasi-(pseudo)metrizable} if there is a quasi-
(pseudo)metric $d$ on $X$ compatible with $\mathscr{F}$, where $d$ is compatible with $\mathscr{F}$ provided $\mathscr{F}=\mathscr{F}(d).$

Denote by $\mathscr{U}^{\star}$ the upper quasi-uniformity on $\mathbb{R}$ the standard base of which consists of the sets $$U_{r}=\{(x, y)\in \mathbb{R}\times \mathbb{R}: y<x+r\},$$where $r$ is an arbitrary positive real number.

\begin{definition}
Given a group $G$ with the neutral element $e$, a function $N: G\rightarrow [0,\infty)$
is called a {\it quasi-prenorm} on $G$ if the following conditions are satisfied:
\begin{enumerate}
\item $N(e)=0$; and
\item $N(gh)\leq N(g)+N(h)$ for all $g, h\in G$.
\end{enumerate}
\end{definition}

Let $X$ be a space and $p\in X$. The character for $p$,  character for $X$ and weight for $X$ are defined as follows respectively:
$$\chi(p, X)=\mbox{min}\{|\mathscr{V}|: \mathscr{V}\ \mbox{is a local base for}\ p\};$$
$$\chi(X)=\mbox{sup}\{\chi(p, X): p\in X\}+\omega;$$
$$\omega(X)=\mbox{min}\{|\mathscr{B}|: \mathscr{B}\ \mbox{is a  base for}\ X\}.$$

Throughout this paper, we use $G(X)$ to denote the topological groups $F(X)$ or $A(X)$, and $PG(X)$ to denote the paratopological groups $FP(X)$ or $AP(X)$. For a subset $Y$ of a space $X$, we use $G(Y, X)$ and $PG(Y, X)$ to denote the subgroups of $G(X)$ and $PG(X)$ generated by $Y$ respectively. Moreover, we denote the abstract groups of $F(X), FP(X)$ by $F_{a}(X)$ and of $A(X)$ and $AP(X)$ by $A_{a}(X)$, respectively.

Since $X$ generates the free group $F_{a}(X)$, each element $g\in F_{a}(X)$ has the form $g=x_{1}^{\varepsilon_{1}}\cdots x_{n}^{\varepsilon_{n}}$, where $x_{1}, \cdots, x_{n}\in X$ and $\varepsilon_{1}, \cdots, \varepsilon_{n}=\pm 1$. This word for $g$ is called {\it reduced} if it contains no pair of consecutive symbols of the form $xx^{-1}$ or $x^{-1}x$. It follow that if the word $g$ is reduced and non-empty, then it is different from the neutral element of $F_{a}(X)$. In particular, each element $g\in F_{a}(X)$ distinct from the neutral element can be uniquely written in the form $g=x_{1}^{r_{1}}x_{2}^{r_{2}}\cdots x_{n}^{r_{n}}$, where $n\geq 1$, $r_{i}\in \mathbb{Z}\setminus\{0\}$, $x_{i}\in X$, and $x_{i}\neq x_{i+1}$ for each $i=1, \cdots, n-1$. Such a word is called the {\it normal form} of $g$. Similar assertions are valid for $A_{a}(X)$.

We now outline some of the ideas of \cite{RS} in a form suitable for our applications.

Suppose that $e$ is the neutral element of the abstract free group $F_{a}(X)$ on $X$, and suppose that $\rho$ is a fixed quasi-pseduometric on $X$ which is bounded by 1. Extend $\rho$ from $X$ to a quasi-pseudometric $\rho_{e}$ on $X\cup\{e\}$ by putting
\[\rho_{e}(x, y)=\left\{
\begin{array}{lll}
0, & \mbox{if } x=y,\\
\rho(x, y), & \mbox{if } x, y\in X,\\
1, & \mbox{otherwise}\end{array}\right.\]
for arbitrary $x, y\in X\cup\{e\}$. By \cite{RS}, we extend $\rho_{e}$ to a quasi-pseudometric $\rho^{\ast}$ on $\tilde{X}=X\cup\{e\}\cup X^{-1}$ defined by
\[\rho^{\ast}(x, y)=\left\{
\begin{array}{lll}
0, & \mbox{if } x=y,\\
\rho_{e}(x, y), & \mbox{if } x, y\in X\cup\{e\},\\
\rho_{e}(y^{-1}, x^{-1}), & \mbox{if } x, y\in X^{-1}\cup\{e\},\\
2, & \mbox{otherwise}\end{array}\right.\] for arbitrary $x, y\in\tilde{X}$.

Let $A$ be a subset of $\mathbb{N}$ such that $|A|=2n$ for some $n\geq 1$. A {\it scheme} on $A$ is a partition of $A$ to pairs $\{a_{i}, b_{i}\}$ with $a_{i}<b_{i}$ such that each two intervals $[a_{i}, b_{i}]$ and $[a_{j}, b_{j}]$ in $\mathbb{N}$ are either disjoint or one contains the other.

If $\mathscr{X}$ is a word in the alphabet $\tilde{X}$, then we denote the reduced form and the length of  $\mathscr{X}$ by $[\mathscr{X}]$ and $\ell (\tilde{X})$ respectively.

For each $n\in \mathbb{N}$, let $\mathscr{S}_{n}$ be the family of all schemes $\varphi$ on $\{1, 2, \cdots, 2n\}$. As in \cite{RS}, define
$$\Gamma_{\rho}(\mathscr{X}, \varphi)=\frac{1}{2}\sum_{i=1}^{2n}\rho^{\ast}(x_{i}^{-1}, x_{\varphi (i)}).$$
Then we define a quasi-prenorm $N_{\rho}: F_{a}(X)\rightarrow [0, +\infty)$ by setting $N_{\rho}(g)=0$ if $g=e$ and $$N_{\rho}(g)=\inf\{\Gamma_{\rho}(\mathscr{X}, \varphi): [\mathscr{X}]=g, \ell (\tilde{X})=2n, \varphi\in\mathscr{S}_{n}, n\in \mathbb{N}\}$$ if $g\in F_{a}(X)\setminus\{e\}$. It follows from Claim 3 in \cite{RS} that $N_{\rho}$ is an invariant quasi-prenorm on $F_{a}(X)$. Put $\hat{\rho}(g, h)=N_{\rho}(g^{-1}h)$ for all $g, h\in F_{a}(X)$. We refer to $\hat{\rho}$ as the Graev extension of $\rho$ to $F_{a}(X)$.

Given a word $\mathscr{X}$ in the alphabet $\tilde{X}$, we say that $\mathscr{X}$ is {\it almost irreducible} if $\tilde{X}$ does not contain two consecutive symbols of the form $u$, $u^{-1}$ or $u^{-1}$, $u$ (but $\mathscr{X}$ may contain several letters equal to $e$), see \cite{RS}.

\bigskip

\section{Some generalized metric properties on free paratopological groups}
Let $X$ be a metrizable space and the topology be generated by a metric $d$. It follows from Theorem 3.2 in \cite{RS} that $d$ can be extended to an invariant metric $\hat{d}$ on $PG(X)$. Therefore, $PG(X)$ admits a weaker metrizable paratopological group topology.

For every non-negative integer $n$, denote by $B_{n}(X)$ the subspace of the free (Abelian) paratopological group $PG(X)$ that consists of all words of reduced length $\leq n$ with respect to the free basis $X$.

\begin{lemma}\cite[Proposition 7.6.1]{A2008}\label{l8}
Let $(X, d)$ be a metric space, and $\mathscr{F}_{d}$ be the topology on $F_{a}(X)$ generated by the Graev extension $\hat{d}$ of $d$ to $F(X)$. Then $B_{n}(X)$ is closed in $(F_{a}(X), \mathscr{F}_{d})$ for each $n\in \mathbb{N}$.
\end{lemma}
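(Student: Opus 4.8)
The plan is to prove the lemma by showing that the complement $F_{a}(X)\setminus B_{n}(X)=\{g:\ell(g)\geq n+1\}$ is open in $(F_{a}(X),\mathscr{F}_{d})$, i.e. that the reduced length $g\mapsto\ell(g)$ is lower semicontinuous with respect to $\hat{d}$. Since $\hat{d}$ is an invariant metric (Theorem 3.2 of \cite{RS}), it suffices to fix a reduced word $g=x_{1}^{\varepsilon_{1}}\cdots x_{m}^{\varepsilon_{m}}$ with $m=\ell(g)\geq n+1$ and to exhibit a radius $\varepsilon=\varepsilon(g)>0$ such that every $h$ with $\hat{d}(g,h)=N_{d}(g^{-1}h)<\varepsilon$ satisfies $\ell(h)\geq m$. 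Such an $h$ cannot lie in $B_{n}(X)$, so the open ball of radius $\varepsilon$ about $g$ misses $B_{n}(X)$, and the complement is open.

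The first step is to read off the local shape of elements of small Graev norm directly from the formula for $N_{\rho}$. Writing $w=g^{-1}h$ and unwinding $\rho^{\ast}$, one computes the contribution of a matched pair $\{i,\varphi(i)\}$ of a realizing word $\mathscr{X}$ with $[\mathscr{X}]=w$ and scheme $\varphi\in\mathscr{S}_{n}$: two like-signed letters contribute $2$, a letter paired with $e$ contributes $1$, a pair $(e,e)$ contributes $0$, and only a mixed pair $\{x,y^{-1}\}$ or $\{x^{-1},y\}$ can be made small, contributing exactly $d(x,y)$. Consequently, as soon as $\varepsilon<1$, any witness to $N_{d}(w)<\varepsilon$ forces the realizing word to split, via a non-crossing scheme, into $(e,e)$-pairs and nested mixed pairs $\{x_{p},y_{p}^{-1}\}$ whose base points satisfy $\sum_{p}d(x_{p},y_{p})=\Gamma_{d}(\mathscr{X},\varphi)<\varepsilon$; in particular no like-signed pair and no letter--$e$ pair occurs.

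The decisive step is to convert this near-cancelling nested structure of $w=g^{-1}h$ into a lower bound on $\ell(h)=\ell(gw)$. The correct choice of $\varepsilon(g)$ is the \emph{collapse cost} $\delta(g)=\inf\{N_{d}(g^{-1}h):\ell(h)\leq m-1\}$, and the whole content of the lemma is that $\delta(g)>0$. To extract a uniform positive bound from the structure above, one replaces each mixed pair $\{x_{p},y_{p}^{-1}\}$ of $\mathscr{X}$ by $\{x_{p},x_{p}^{-1}\}$; the resulting word $\mathscr{X}'$ has only exactly cancelling nested pairs and therefore reduces to $e$, while it differs from $\mathscr{X}$ by moving letters a total $d$-distance of $N_{d}(w)$. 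Thus $h=gw$ is obtained from $g\cdot e=g$ by perturbing letters by total cost $N_{d}(w)$, and once $\varepsilon(g)$ is taken below $1$ and below the minimal separation governing the finitely many ways of shortening $g$, such a perturbation cannot produce the genuine cancellations needed to drop the reduced length below $m$. In the Abelian group $AP(X)$ this is transparent: $N_{d}$ is the Kantorovich (optimal transport) cost between the positive and negative letters, and $\delta(g)$ is then a positive minimum of finitely many sums of distances among the distinct letters of $g$.

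The main obstacle is precisely this last translation in the non-Abelian case, because reduction in the free group is sensitive to the \emph{order} of the letters. A mixed pair of $\mathscr{X}$ need not correspond to an adjacent cancellation in $w=[\mathscr{X}]$, and the reduced forms $[\mathscr{X}]$ and $[\mathscr{X}']$ may differ substantially even though $\mathscr{X}$ and $\mathscr{X}'$ differ in only a few letters; the nesting structure of the scheme must therefore be tracked carefully to guarantee that no genuine shortening of $g$ can hide inside an arbitrarily cheap net of mixed pairs. This is where the care must go, and it is the point at which the quantitative comparison of $\ell(g)$ with $\ell(h)$ is finally forced.
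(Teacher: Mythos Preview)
The paper does not prove this lemma at all: it is quoted verbatim from \cite[Proposition~7.6.1]{A2008} and used as a black box in the very next theorem. So there is no ``paper's own proof'' to compare against.

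As for your proposal, it is not a proof but a plan that stops exactly where the work begins. You correctly reduce the problem to showing that the collapse cost
\[
\delta(g)=\inf\{N_{d}(g^{-1}h):\ell(h)\le m-1\}
\]
is strictly positive for every reduced $g$ of length $m$, and you correctly observe that when $N_{d}(w)<1$ every realizing pair $(\mathscr{X},\varphi)$ consists solely of nested mixed pairs $\{x,y^{-1}\}$ plus $(e,e)$-pairs. But the sentence ``once $\varepsilon(g)$ is taken below $1$ and below the minimal separation governing the finitely many ways of shortening $g$, such a perturbation cannot produce the genuine cancellations needed'' is an assertion, not an argument: you have not said what this ``minimal separation'' is, nor why it is positive, nor why it actually bounds $N_{d}(g^{-1}h)$ from below for \emph{every} $h$ with $\ell(h)<m$. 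Your own final paragraph concedes this: you describe the non-Abelian order-sensitivity as ``the main obstacle'' and ``the point at which the quantitative comparison \dots\ is finally forced,'' and then the proposal ends without forcing it.

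The gap is genuine. The replacement $\mathscr{X}\mapsto\mathscr{X}'$ you suggest (turning each mixed pair $\{x_{p},y_{p}^{-1}\}$ into $\{x_{p},x_{p}^{-1}\}$) shows $[\mathscr{X}']=e$, but it does \emph{not} show that $h=g[\mathscr{X}]$ has length $\ge m$; for that you must control how the letters of $g^{-1}$ and of $h$ are distributed inside $\mathscr{X}$ and how the scheme links them, and you have not done this. The Abelian aside is irrelevant to the statement, which is about $F_{a}(X)$. To finish along your lines you would need the explicit lower bound for $N_{d}$ proved in \cite{A2008} (the lemma preceding 7.6.1 there), which is precisely the quantitative content you are missing.
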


\begin{theorem}\label{t9}
Let $(X, d)$ be a metric space, and $\mathscr{F}_{d}$ be the topology on $FPX$ generated by the Graev extension $\hat{d}$ of $d$ to $FP(X)$. Then $B_{n}(X)$ is closed in $(F_{a}(X), \mathscr{F}_{d})$ for each $n\in \mathbb{N}$.
\end{theorem}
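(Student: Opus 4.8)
The plan is to reduce the statement to its topological counterpart, Lemma~\ref{l8}, by showing that when $\rho=d$ is a metric the Graev extension $\hat{d}$ of $d$ to $FP(X)$ coincides, as a function on $F_{a}(X)$, with the classical Graev extension of $d$ to $F(X)$. Since the abstract group $F_{a}(X)$ underlying $FP(X)$ and $F(X)$ is the same, and the subspace $B_{n}(X)$ is defined purely combinatorially (words of reduced length $\leq n$ with respect to $X$), it suffices to prove that the two extensions define the same function, hence the same topology $\mathscr{F}_{d}$; the conclusion then transfers verbatim from Lemma~\ref{l8}. A minor preliminary point I would dispose of at the outset is the boundedness hypothesis built into the construction of $N_{\rho}$: replacing $d$ by $\min\{d,1\}$ alters neither the topology of $X$ nor the sets $B_{n}(X)$, so there is no loss in assuming $d$ is bounded by $1$.

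First I would verify that symmetry of $d$ propagates through the whole construction. Because $d$ is a metric, $\rho_{e}$ is symmetric on $X\cup\{e\}$, and a direct inspection of the four cases defining $\rho^{\ast}$ shows that $\rho^{\ast}$ is symmetric on $\tilde{X}=X\cup\{e\}\cup X^{-1}$. Next, writing a scheme $\varphi\in\mathscr{S}_{n}$ as a fixed-point-free involution of $\{1,\dots,2n\}$, each matched pair $\{i,\varphi(i)\}$ contributes the two terms $\rho^{\ast}(x_{i}^{-1},x_{\varphi(i)})$ and $\rho^{\ast}(x_{\varphi(i)}^{-1},x_{i})$ to the sum $\sum_{i=1}^{2n}\rho^{\ast}(x_{i}^{-1},x_{\varphi(i)})$. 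Using the case distinctions in the definition of $\rho^{\ast}$ together with its symmetry, one checks that for the letters actually paired in an almost irreducible word these two terms are equal (both reduce to a value of the form $d(a,b)$, or to $1$ when one of the letters is $e$). The factor $\tfrac12$ in $\Gamma_{\rho}(\mathscr{X},\varphi)$ then collapses the sum to one extended distance per matched pair, so that $\Gamma_{d}(\mathscr{X},\varphi)$ becomes exactly the sum over the non-crossing perfect matching $\varphi$ that appears in Graev's original construction. Taking infima over all words $\mathscr{X}$ with $[\mathscr{X}]=g$ and all schemes, both constructions range over identical combinatorial families with identical summands, whence $N_{d}$ agrees with the classical Graev prenorm; in particular $N_{d}$ is symmetric and $\hat{d}(g,h)=N_{d}(g^{-1}h)$ is precisely the invariant Graev metric on $F_{a}(X)$.

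Once this identification is secured, the topology $\mathscr{F}_{d}$ generated on $F_{a}(X)$ by the forward balls $B_{\hat{d}}(g,r)$ coincides with the usual metric topology of the Graev metric, because $\hat{d}$ is now symmetric; this is exactly the topology occurring in Lemma~\ref{l8}. Applying that lemma yields that $B_{n}(X)$ is closed in $(F_{a}(X),\mathscr{F}_{d})$ for every $n\in\mathbb{N}$, which is the assertion.

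I expect the middle step to be the main obstacle: confirming that the scheme-based infimum $N_{d}$ literally equals Graev's norm rather than merely inducing an equivalent topology. This hinges on careful bookkeeping of the cases defining $\rho^{\ast}$, in order to show that the two directional distances in each matched pair coincide and that the asymmetric ``otherwise'' values $1$ and $2$ never enter an optimal sum. It is precisely the non-crossing condition on schemes, combined with the reduction to almost irreducible words, that forces optimal matchings to pair letters with their formal inverses and thereby keeps these large ``otherwise'' values out of the computation.
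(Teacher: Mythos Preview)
Your proposal is correct and follows the same route as the paper: both reduce to Lemma~\ref{l8} by identifying the Graev extension $\hat{d}$ on $FP(X)$ with the classical Graev extension on $F(X)$, using that the underlying abstract group $F_{a}(X)$ is the same (the paper invokes that metric spaces are Tychonoff to secure this). The paper states the identification in a single line, whereas you supply the symmetry-propagation details; your residual worry about whether $N_{d}$ \emph{literally} equals Graev's norm is unwarranted, since the construction in \cite{RS} is the same scheme-based formula applied to quasi-pseudometrics, so specializing to a symmetric $d$ recovers the classical norm verbatim.
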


\begin{proof}
Since $X$ is Tychonoff, $FP(X)$ coincides algebraically with $F(X)$, and hence $\hat{d}$ on $FP(X)$ coincides with Graev's extension of $d$ over the
topological group $F(X)$. By Lemma~\ref{l8}, we complete the proof.
\end{proof}

By the similar proof of Theorem~\ref{t9}, we have the following theorem.

\begin{theorem}\label{t10}
Let $(X, d)$ be a metric space, and $\mathscr{F}_{d}$ be the topology on $A_{a}(X)$ generated by the Graev extension $\hat{d}$ of $d$ to $AP(X)$. Then $B_{n}(X)$ is closed in $(A_{a}(X), \mathscr{F}_{d})$ for each $n\in \mathbb{N}$.
\end{theorem}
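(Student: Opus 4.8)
The plan is to mimic the proof of Theorem~\ref{t9} verbatim, reducing the abelian statement to the already-established topological-group result. The key observation is that the entire argument rests on an algebraic-coincidence fact: when $X$ is Tychonoff, the free abelian paratopological group $AP(X)$ coincides \emph{algebraically} with the free abelian topological group $A(X)$, both having the same underlying abstract abelian group $A_{a}(X)$. Granting this, Graev's extension $\hat{d}$ of the metric $d$ is defined purely in terms of the abstract group $A_{a}(X)$ and the quasi-prenorm $N_{d}$ (which in the abelian case is built from schemes exactly as in the nonabelian case, see the construction recalled in the Preliminaries), so the prenorm, and hence the metric topology $\mathscr{F}_{d}$, is literally the same object whether one regards it as living over $AP(X)$ or over $A(X)$.

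First I would state that since $X$ is Tychonoff, $AP(X)$ coincides algebraically with $A(X)$, so the Graev extension $\hat{d}$ on $AP(X)$ agrees with Graev's extension of $d$ over the topological group $A(X)$. Second, I would invoke the abelian analog of Lemma~\ref{l8}. The excerpt only cites \cite[Proposition 7.6.1]{A2008} for the nonabelian group $F(X)$, but the standard theory of free abelian topological groups (as developed in the same source) gives the parallel statement that $B_{n}(X)$ is closed in $(A_{a}(X), \mathscr{F}_{d})$ for every $n$; this is the abelian counterpart of Lemma~\ref{l8} and is what makes the phrase \emph{``by the similar proof of Theorem~\ref{t9}''} accurate. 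With that in hand, the closedness of $B_{n}(X)$ in $(A_{a}(X), \mathscr{F}_{d})$ transfers immediately.

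The only genuine obstacle is verifying the algebraic-coincidence step, i.e.\ that the abstract group underlying $AP(X)$ is the free abelian group $A_{a}(X)$ and that the metric $\hat{d}$ does not secretly depend on which category ($T$-groups versus $P$-groups) one forms the extension in. This is where the Tychonoff hypothesis is used: it guarantees enough continuous real-valued functions (equivalently, that the upper quasi-uniformity $\mathscr{U}^{\star}$ on $\mathbb{R}$ separates points well enough) for the free paratopological group to have the same free abelian basis as the free topological group, so no additional identifications occur. Once this is pinned down, everything else is formal. Accordingly the proof is very short:

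\begin{proof}
Since $X$ is Tychonoff, $AP(X)$ coincides algebraically with $A(X)$, and hence $\hat{d}$ on $AP(X)$ coincides with Graev's extension of $d$ over the topological group $A(X)$. Consequently $\mathscr{F}_{d}$ is the topology on $A_{a}(X)$ generated by this common Graev extension. By the abelian analog of Lemma~\ref{l8}, $B_{n}(X)$ is closed in $(A_{a}(X), \mathscr{F}_{d})$ for each $n\in\mathbb{N}$, which completes the proof.
\end{proof}
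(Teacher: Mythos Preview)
Your proposal is correct and takes essentially the same approach as the paper: the paper simply states that Theorem~\ref{t10} follows ``by the similar proof of Theorem~\ref{t9}'', and your argument is precisely the abelian transcription of that proof, using the algebraic coincidence of $AP(X)$ with $A(X)$ for Tychonoff $X$ and the abelian analogue of Lemma~\ref{l8}. The only point worth flagging is that the paper does not state an abelian version of Lemma~\ref{l8} separately, so your explicit mention that one must invoke the abelian counterpart (available in the same source \cite{A2008}) is if anything more careful than the paper itself.
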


\begin{proposition}\label{p0}
The sets $B_{n}(X)$ is closed in $FP(X)$.
\end{proposition}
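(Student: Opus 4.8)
The plan is to separate each element lying outside $B_{n}(X)$ from $B_{n}(X)$ by transporting the problem, through a continuous homomorphism, to a free paratopological group over a \emph{metric} space, where Theorem~\ref{t9} already applies. Throughout I use that $X$ is Tychonoff, as in the setting of Theorem~\ref{t9}.

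First I would fix $g\in F_{a}(X)$ of reduced length greater than $n$ and write it in normal form $g=x_{1}^{r_{1}}\cdots x_{k}^{r_{k}}$, so that its support $S=\{x_{1},\dots,x_{k}\}$ is finite and $x_{i}\neq x_{i+1}$ for each $i$. Since $X$ is Tychonoff, any two distinct points of $S$ are separated by a continuous real-valued function; assembling finitely many such functions yields a continuous map $f\colon X\to M$ into a metric space $M$ (for instance a cube $[0,1]^{m}$) that is injective on $S$. Composing $f$ with the natural map $M\to FP(M)$ gives a continuous map of $X$ into the paratopological group $FP(M)$, which by the defining universal property of $FP(X)$ extends to a continuous homomorphism $\hat{f}\colon FP(X)\to FP(M)$.

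Next I would follow the reduced length along $\hat{f}$. On one hand, the homomorphism $\hat{f}$ carries any reduced word of length $\leq n$ to a word of length $\leq n$, whose reduced length is then $\leq n$; hence $\hat{f}(B_{n}(X))\subseteq B_{n}(M)$. On the other hand $\hat{f}(g)=f(x_{1})^{r_{1}}\cdots f(x_{k})^{r_{k}}$, and since $f$ is injective on $S$ the consecutive letters $f(x_{i})$ and $f(x_{i+1})$ remain distinct, so this word is already in normal form and $\hat{f}(g)$ has reduced length greater than $n$; thus $\hat{f}(g)\notin B_{n}(M)$. Because $M$ is metric, Theorem~\ref{t9} gives that $B_{n}(M)$ is closed in the Graev topology $\mathscr{F}_{d}$ on $F_{a}(M)$, and since $\mathscr{F}_{d}$ is weaker than the topology of $FP(M)$, it follows that $B_{n}(M)$ is closed in $FP(M)$ as well.

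Finally, $\hat{f}^{-1}(B_{n}(M))$ is closed in $FP(X)$ by continuity of $\hat{f}$, it contains $B_{n}(X)$, and it omits $g$; hence $g\notin\overline{B_{n}(X)}$. As $g$ was an arbitrary point outside $B_{n}(X)$, this proves that $B_{n}(X)$ is closed. The main obstacle is the middle step, namely ensuring that $\hat{f}$ does not collapse $g$ to a shorter word: this is exactly what forces $f$ to be injective on the support of $g$ and is where the Tychonoff separation property is used. Everything else is a routine transfer of the metric case supplied by Theorem~\ref{t9}, via the inclusion of $\mathscr{F}_{d}$ in the topology of $FP(M)$.
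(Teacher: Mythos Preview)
Your proof is correct, but the paper takes a much shorter route: it simply observes that the identity map $i\colon FP(X)\to F(X)$ is a continuous isomorphism (the free topological group topology is a paratopological group topology inducing the original topology on $X$, hence coarser than that of $FP(X)$ by Lemma~\ref{l11}), and then pulls back the classical fact that each $B_{n}(X)$ is closed in the free topological group $F(X)$. Your argument instead separates each $g\notin B_{n}(X)$ by mapping into a free paratopological group over a metric target and invoking Theorem~\ref{t9}. In effect you are reproving the well-known closedness of $B_{n}(X)$ in $F(X)$ from scratch, only routed through $FP(M)$ and the Graev topology rather than through $F(M)$. The paper's approach is a one-line reduction to the topological-group literature; yours is longer but more self-contained within the paratopological framework developed here, relying only on Theorem~\ref{t9} and the universal property.
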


\begin{proof}
Let $i: FP(X)\rightarrow F(X)$ be the identity
mapping. Then $i$ is a continuous isomorphism, and then $B_{n}(X)$ is closed in $FP(X)$ since each $B_{n}(X)$ is closed in $FP(X)$.
\end{proof}

By the similar proof of Proposition~\ref{p0}, we have the following
proposition.

\begin{proposition}\label{p1}
The sets $B_{n}(X)$ is closed in $AP(X)$.
\end{proposition}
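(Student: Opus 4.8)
The plan is to mirror exactly the structure used in the proof of Proposition~\ref{p0}, since Proposition~\ref{p1} is the Abelian analogue. First I would introduce the continuous identity isomorphism $i: AP(X)\rightarrow A(X)$; this map is a continuous isomorphism of abstract groups because, as noted before Theorem~\ref{t9}, when $X$ is Tychonoff the free Abelian paratopological group $AP(X)$ coincides algebraically with the free Abelian topological group $A(X)$, and the paratopological group topology on $AP(X)$ is finer than (or equal to) the topological group topology carried over from $A(X)$.

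The key step is then to invoke closedness of $B_{n}(X)$ in the \emph{topological} group $A(X)$. Here I would appeal to Theorem~\ref{t10}, which establishes that $B_{n}(X)$ is closed in $(A_{a}(X),\mathscr{F}_{d})$ for the Graev extension $\hat{d}$ of a compatible metric $d$. More precisely, since $A(X)$ carries a topology finer than the metrizable topology $\mathscr{F}_{d}$, the set $B_{n}(X)$ is closed in $A(X)$ as well. Because $i$ is continuous, the preimage $i^{-1}(B_{n}(X))$ is closed in $AP(X)$; but $i$ is a bijection fixing $X$ pointwise and carrying $B_{n}(X)$ onto itself, so $i^{-1}(B_{n}(X)) = B_{n}(X)$, and therefore $B_{n}(X)$ is closed in $AP(X)$.

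The step I expect to require the most care is the passage from closedness in the metric topology $\mathscr{F}_{d}$ to closedness in the genuine topological group $A(X)$, and then to closedness in $AP(X)$. Strictly, Theorem~\ref{t10} only gives closedness in the possibly coarser topology $\mathscr{F}_{d}$; I would justify that this suffices by observing that $\mathscr{F}_{d}$ is weaker than the topologies of both $A(X)$ and $AP(X)$, so a $\mathscr{F}_{d}$-closed set is closed in each finer topology. This is the genuine content hidden behind the phrase ``similar proof,'' and it is the analogue of the slightly garbled sentence in the proof of Proposition~\ref{p0}; once this inclusion of topologies is made explicit, the conclusion is immediate and no further computation is needed.
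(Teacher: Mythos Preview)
Your approach is exactly the one the paper intends: use the continuous identity isomorphism $i:AP(X)\to A(X)$, use that $B_{n}(X)$ is closed on the $A(X)$ side, and pull back. That is precisely the content of ``by the similar proof of Proposition~\ref{p0}.''

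One point deserves care. To justify that $B_{n}(X)$ is closed in $A(X)$ you appeal to Theorem~\ref{t10}, which is stated for a \emph{metric} space $(X,d)$. Proposition~\ref{p1} carries no metrizability hypothesis, and the paper later applies it (in the proof of the direct limit theorem for $P$-spaces) to spaces that are certainly not metrizable. So routing the argument through Theorem~\ref{t10} and the Graev extension $\hat d$ of a ``compatible metric $d$'' silently restricts the statement. The paper's own proof of Proposition~\ref{p0} does not do this: it simply uses the classical fact that $B_{n}(X)$ is closed in $F(X)$ (respectively $A(X)$) for an arbitrary Tychonoff $X$, a standard result from the theory of free topological groups (see e.g.\ \cite{A2008}). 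If you replace your appeal to Theorem~\ref{t10} by a direct citation of that fact, your write-up matches the paper's argument with no residual hypothesis; the chain of ``$\mathscr F_{d}$ is coarser than the topology of $A(X)$ is coarser than the topology of $AP(X)$'' then becomes unnecessary, and the proof collapses to the same one-line continuity argument the paper gives.
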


By the definition of free paratopological groups, it is easy to obtain the following lemma.

\begin{lemma}\label{l11}\cite{RS}
The topology of the group $F_{a}(X)$ is the finest paratopological group topology on $F_{a}X$ that generates on $X$ its original topology. The same is valid for $AP(X)$.
\end{lemma}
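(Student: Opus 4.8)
The plan is to read the statement as asserting that the topology $\mathscr{T}$ carried by $FP(X)$, viewed as a topology on the abstract free group $F_{a}(X)$, is the finest among all paratopological group topologies on $F_{a}(X)$ whose restriction to $X$ recovers the original topology of $X$. Two things must be checked: first, that $\mathscr{T}$ itself belongs to this class, and second, that every other member of the class is coarser than $\mathscr{T}$. The first point is essentially built into the definition of $FP(X)$: the multiplication of $FP(X)$ is jointly continuous because $FP(X)$ is a paratopological group, and the canonical map $X\rightarrow FP(X)$ is a topological embedding, so the subspace topology induced on $X$ by $\mathscr{T}$ is exactly the original topology. This embedding property is a result of \cite{RS}, which I would cite directly.

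For the second and main point, let $\mathscr{T}'$ be any paratopological group topology on $F_{a}(X)$ that induces the original topology on $X$, and write $H=(F_{a}(X), \mathscr{T}')$. Since $\mathscr{T}'$ restricts to the original topology on $X$, the inclusion $j: X\rightarrow H$ is continuous. The universal property in the definition of $FP(X)$ then yields a continuous homomorphism $\hat{j}: FP(X)\rightarrow H$ extending $j$. Because $X$ generates $F_{a}(X)$ algebraically and $\hat{j}$ agrees with the identity on $X$, the homomorphism $\hat{j}$ must coincide with the identity map of $F_{a}(X)$. Thus the identity map $(F_{a}(X), \mathscr{T})\rightarrow (F_{a}(X), \mathscr{T}')$ is continuous, which is precisely the inclusion $\mathscr{T}'\subseteq\mathscr{T}$. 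Combining this with the first point, $\mathscr{T}$ is a paratopological group topology inducing the original topology on $X$ that refines every other such topology, hence it is the finest one.

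The step I expect to require the most care is the verification that $X$ embeds in $FP(X)$, that is, that the subspace topology inherited from $\mathscr{T}$ coincides with the given topology on $X$; without this, $\mathscr{T}$ would not even lie in the class it is claimed to dominate. Fortunately this is exactly the embedding theorem of \cite{RS}, so I would invoke it rather than reprove it. The Abelian case is handled by the identical argument with $A_{a}(X)$ in place of $F_{a}(X)$ and $AP(X)$ in place of $FP(X)$, using the universal property for free Abelian paratopological groups; no new ideas are needed.
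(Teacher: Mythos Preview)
The paper does not supply a proof of this lemma; it is stated with a citation to \cite{RS} and used as a black box. Your argument is the standard one-line justification via the universal property of $FP(X)$ (respectively $AP(X)$) and is correct. The only nontrivial input, as you note, is that $X$ is topologically embedded in $FP(X)$, which is precisely the content of the main existence theorem in \cite{RS}; once that is granted, the maximality of the topology follows immediately from the extension property applied to the identity map $X\rightarrow (F_{a}(X),\mathscr{T}')$.
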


Recalled that $X$ is a {\it $P$-space} if each $G_{\delta}$-set in $X$ is open.

\begin{theorem}\label{t8}
The groups $FP(X)$ and $AP(X)$ are $P$-spaces if and only if $X$ is a $P$-space.
\end{theorem}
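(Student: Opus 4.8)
The plan is to treat the two implications separately, with the sufficiency of the condition on $X$ carrying all the substance.

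\emph{Necessity.} Here I would use that $X$ embeds as a subspace of both $FP(X)$ and $AP(X)$, together with the fact that the $P$-space property is hereditary: if $Y\subseteq Z$ and $Z$ is a $P$-space, then a $G_{\delta}$-set $\bigcap_{n}(U_{n}\cap Y)$ in $Y$ (with each $U_{n}$ open in $Z$) equals $(\bigcap_{n}U_{n})\cap Y$, and $\bigcap_{n}U_{n}$ is open in $Z$, so its trace on $Y$ is open. Thus if $FP(X)$ or $AP(X)$ is a $P$-space, then so is $X$.

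\emph{Sufficiency.} Assume $X$ is a $P$-space. The main device is the $G_{\delta}$-modification: for a space $(G,\tau)$ let $\tau_{\delta}$ be the topology generated by all $G_{\delta}$-subsets of $(G,\tau)$. Then $\tau\subseteq\tau_{\delta}$, and $(G,\tau_{\delta})$ is always a $P$-space, so it suffices to show $\tau=\tau_{\delta}$ for $G=FP(X)$. First I would verify that if $(G,\tau)$ is a paratopological group then so is $(G,\tau_{\delta})$. For joint continuity of multiplication at $(g,h)$, take a basic neighborhood $W=\bigcap_{n}W_{n}$ of $gh$ in $\tau_{\delta}$ with each $W_{n}\in\tau$; continuity of multiplication in $\tau$ yields open sets $U_{n}\ni g$ and $V_{n}\ni h$ with $U_{n}V_{n}\subseteq W_{n}$, and then $U=\bigcap_{n}U_{n}$ and $V=\bigcap_{n}V_{n}$ are $\tau_{\delta}$-neighborhoods of $g$ and $h$ with $UV\subseteq W$. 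Only joint continuity of the product is used, so inversion never enters and the argument is valid for paratopological groups.

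Next I would apply this to $G=FP(X)$ with its free topology $\tau$. The key computation is that the $G_{\delta}$-modification commutes with restriction to the subspace $X$, i.e. $(\tau_{\delta})|_{X}=(\tau|_{X})_{\delta}$; both inclusions follow from the identity $(\bigcap_{n}U_{n})\cap X=\bigcap_{n}(U_{n}\cap X)$. Since $X$ carries its original topology $\tau|_{X}$ and is a $P$-space, we have $(\tau|_{X})_{\delta}=\tau|_{X}$, so $\tau_{\delta}$ induces on $X$ exactly its original topology. As $(F_{a}(X),\tau_{\delta})$ is then a paratopological group topology inducing the original topology on $X$, Lemma~\ref{l11} forces $\tau_{\delta}\subseteq\tau$; combined with $\tau\subseteq\tau_{\delta}$ this gives $\tau=\tau_{\delta}$, so $FP(X)$ is a $P$-space. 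The identical argument, using the $AP(X)$ part of Lemma~\ref{l11}, handles $AP(X)$.

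The step I expect to be the main obstacle is confirming that $\tau_{\delta}$ is genuinely a paratopological group topology and that it restricts correctly to $X$; everything after that is a short appeal to the maximality property in Lemma~\ref{l11}. The continuity verification is the only point at which the paratopological (as opposed to topological) setting could cause trouble, but since the computation uses solely the joint continuity of multiplication together with simultaneous countable intersections in both coordinates, no symmetry or continuity of inversion is required.
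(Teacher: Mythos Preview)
Your proof is correct and follows essentially the same route as the paper: both form the $G_{\delta}$-modification of the free paratopological group topology, observe that it is again a paratopological group topology inducing the original topology on $X$ (since $X$ is a $P$-space), and then invoke the maximality in Lemma~\ref{l11} to conclude that the two topologies coincide. Your write-up is in fact more careful than the paper's, which asserts without justification that the $G_{\delta}$-sets form a topology and that multiplication stays jointly continuous, whereas you spell out both points.
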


\begin{proof}
Necessity is a consequence of the fact that subspaces of $P$-spaces are $P$-spaces.

Sufficiency. Let $X$ be a $P$-space. Suppose that $\mathscr{F}$ is the topology of $PG(X)$. Put $\mathscr{F}_{1}$ be the sets consisting of all $G_{\delta}$-sets in $PG(X)$. It is easy to see that $\mathscr{F}_{1}$ is a topology on $PG(X)$. Moreover, $(PG(X), \mathscr{F}_{1})$ is a paratopological group topology. Since $X$ is a $P$-space, the restrictions of both $\mathscr{F}$ and $\mathscr{F}_{1}$ to $X$ coincide the original topology of $X$. Therefore, ones have $\mathscr{F}=\mathscr{F}_{1}$ by Lemma~\ref{l11}. Therefore, $PG(X)$ is a $P$-space.
\end{proof}

We say that the topology of a space $X$ is {\it determined by a family} $\mathscr{C}$ of its subsets provided that a set $F\subset X$ is closed in $X$ iff $F\cap C$ is closed in $C$ for each $C\in\mathscr{C}$; We say that $G(X)$ ($PG(X)$) has the {\it direct limit property} if the topology of $G(X)$ ($PG(X)$) is determined by the family $\{B_{n}(X): n\in \mathbb{N}\}$.

\begin{theorem}
If $X$ is a $P$-space, then the groups $FP(X)$ and $AP(X)$ have the direct limit property.
\end{theorem}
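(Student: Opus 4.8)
The plan is to verify both directions of the ``determined by $\{B_{n}(X):n\in\mathbb{N}\}$'' condition, treating $FP(X)$ and $AP(X)$ uniformly through the notation $PG(X)$. One direction is automatic: if $F$ is closed in $PG(X)$, then $F\cap B_{n}(X)$ is closed in the subspace $B_{n}(X)$ for every $n$, regardless of any hypothesis on $X$. So the real content is the converse, and everything will hinge on the single structural feature that a $P$-space contributes.

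The key observation I would isolate at the outset is that in a $P$-space the $F_{\sigma}$-sets are exactly the closed sets: taking complements in the defining condition (every $G_{\delta}$-set is open), one sees that a countable union of closed sets is again closed. By Theorem~\ref{t8}, the hypothesis that $X$ is a $P$-space guarantees that $PG(X)$ is itself a $P$-space, so this countable-union-closed property is available throughout $PG(X)$.

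With that in hand the converse is short. Suppose $F\subset PG(X)$ satisfies the condition that $F\cap B_{n}(X)$ is closed in $B_{n}(X)$ for every $n\in\mathbb{N}$. By Proposition~\ref{p0} (for $FP(X)$) and Proposition~\ref{p1} (for $AP(X)$), each $B_{n}(X)$ is closed in $PG(X)$; since a set closed in a closed subspace is closed in the whole space, each $F\cap B_{n}(X)$ is therefore closed in $PG(X)$. Because $X$ generates the group and every element has finite reduced length, $PG(X)=\bigcup_{n\in\mathbb{N}}B_{n}(X)$, and consequently
\[
F=\bigcup_{n\in\mathbb{N}}\bigl(F\cap B_{n}(X)\bigr).
\]
This exhibits $F$ as a countable union of closed subsets of the $P$-space $PG(X)$, hence as an $F_{\sigma}$-set, which is therefore closed. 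This establishes that the topology of $PG(X)$ is determined by $\{B_{n}(X):n\in\mathbb{N}\}$, i.e.\ $PG(X)$ has the direct limit property.

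I do not anticipate a genuine obstacle, since the argument is essentially a bookkeeping combination of three earlier facts; the only point that requires care is the recognition that the $P$-space property is precisely what upgrades ``closed on each piece'' to ``closed overall'' for the countable exhaustion by the $B_{n}(X)$. The mild subtlety worth flagging is ensuring the passage from ``closed in $B_{n}(X)$'' to ``closed in $PG(X)$'' is justified, which is exactly where Propositions~\ref{p0} and~\ref{p1} are invoked.
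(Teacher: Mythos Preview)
Your proof is correct and follows essentially the same route as the paper: use Propositions~\ref{p0} and~\ref{p1} to promote each $F\cap B_{n}(X)$ from closed-in-$B_{n}(X)$ to closed-in-$PG(X)$, observe that $F$ is therefore an $F_{\sigma}$-set, and invoke Theorem~\ref{t8} to conclude that $PG(X)$ is a $P$-space so that this $F_{\sigma}$-set is closed. Your write-up is a bit more explicit (spelling out why $F_{\sigma}$-sets are closed in $P$-spaces and writing $F=\bigcup_{n}(F\cap B_{n}(X))$), but the argument is the same.
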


\begin{proof}
Assume that $K$ is a subsets of $PG(X)$ such that $K\cap B_{n}(X)$ is closed in $B_{n}(X)$ for each $n\in \mathbb{N}$. It follows from Propositions~\ref{p0} and~\ref{p1} that the sets $B_{n}(X)$ are closed in $PG(X)$. Hence the sets $B_{n}(X)\cap K$ are closed in $PG(X)$. It follows that $K$ is an $F_{\sigma}$-set in $PG(X)$. By Theorem~\ref{t8}, we can see that $PG(X)$ is a $P$-space, which implies that $K$ is closed in $PG(X)$.
\end{proof}

By the group reflexion $G^{\flat}=(G, \tau^{\flat})$ of a paratopological group $(G, \tau)$ we understand the group $G$ endowed with the strongest topology $\tau^{\flat}\subset \tau$ such that $(G, \tau^{\flat})$ is a topological group.

A space is a {\it functionally Hausdorff space} if two distinct points $x$ and $y$ there is a continuous real-valued mapping $f$ on $X$ such that $f(x)\neq f(y)$.

In \cite{PN}, N.M. Pyrch and A.V. Ravsky proved that if $X$ is a Functionally Hausdorff space then the topological group $FP(X)^{\flat}$ and $F(X)$ are topological isomorphic. In fact, we can prove that if $X$ is a Functionally Hausdorff space then the topological group $AP(X)^{\flat}$ and $A(X)$ are topological isomorphic by an argument similar to the one in \cite{PN}. Therefore, we have the following lemma.

\begin{lemma}\label{l12}
If $X$ is a Functionally Hausdorff space then the topological group $PG(X)^{\flat}$ and $G(X)$ are topological isomorphic.
\end{lemma}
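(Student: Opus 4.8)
The plan is to separate the two cases hidden in the notation $PG(X)\in\{FP(X),AP(X)\}$ and $G(X)\in\{F(X),A(X)\}$. For $PG(X)=FP(X)$ and $G(X)=F(X)$ the assertion is precisely the theorem of Pyrch and Ravsky in \cite{PN}, so in that case I would simply invoke it. Thus the real work is the abelian case $AP(X)^{\flat}\cong A(X)$, and the strategy is to reproduce the argument of \cite{PN} with commutativity carried through every construction; the underlying abstract group is $A_{a}(X)$ in both $AP(X)$ and $A(X)$, so only the coincidence of topologies is at stake.

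For the abelian case I would first record a workable description of the group reflexion: writing $\tau$ for the topology of $AP(X)$, the reflexion topology $\tau^{\flat}$ equals the initial topology on $A_{a}(X)$ induced by the family of all continuous homomorphisms from $AP(X)$ into (Hausdorff) topological groups. This initial topology is a group topology contained in $\tau$, and it dominates every group topology $\sigma\subseteq\tau$ (apply the defining family to the identity homomorphism $(A_{a}(X),\tau)\to(A_{a}(X),\sigma)$), so it is indeed the strongest topological group topology below $\tau$. Next I would verify that $AP(X)^{\flat}$, together with the canonical map $i^{\flat}\colon X\to AP(X)^{\flat}$ (the composite of $X\to AP(X)$ with the continuous identity $AP(X)\to AP(X)^{\flat}$), satisfies the universal property of $A(X)$: given a continuous $g\colon X\to H$ into an abelian topological group $H$, freeness of $AP(X)$ extends $g$ to a continuous homomorphism $\hat g\colon AP(X)\to H$, and since $\hat g$ belongs to the family defining $\tau^{\flat}$ it stays continuous as a map $AP(X)^{\flat}\to H$; uniqueness of the extension is automatic because $i^{\flat}(X)$ generates $A_{a}(X)$. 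By uniqueness of free objects this would produce a topological isomorphism $AP(X)^{\flat}\cong A(X)$, provided both are legitimate (Hausdorff) topological groups built over the same input topology on $X$, a point I address below.

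The step I expect to be the main obstacle, and the sole place where functional Hausdorffness is really needed, is ensuring that both groups are compared as genuine topological groups — which in the convention of this paper means Hausdorff — and that they are built over the same reflected topology on $X$. The reflexion topology $\tau^{\flat}$ induces on $X$ the coarsest topology making all continuous maps of $X$ into topological groups continuous, and the free topological group $A(X)$ is sensitive only to that same reflected topology; the delicate point is that this common reflection is Hausdorff. Here functional Hausdorffness is used directly: the real line is a Hausdorff topological group and, by hypothesis, continuous real-valued functions separate the points of $X$, so after extending them over $AP(X)$ by freeness and translating, one separates all of $A_{a}(X)$ and concludes that $\tau^{\flat}$ is Hausdorff. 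This separation argument, carried out in \cite{PN} for $F_{a}(X)$, is exactly what transfers to $A_{a}(X)$; once it is in place the universal-property identification above gives $AP(X)^{\flat}\cong A(X)$, and together with the non-abelian theorem of \cite{PN} the lemma follows.
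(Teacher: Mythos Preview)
Your proposal is correct and matches the paper's approach exactly: the paper does not give a self-contained proof of this lemma at all, but simply records in the preceding paragraph that the case $PG(X)=FP(X)$ is the theorem of Pyrch and Ravsky in \cite{PN}, and asserts that the abelian case $AP(X)^{\flat}\cong A(X)$ ``can be proved \ldots\ by an argument similar to the one in \cite{PN}.'' Your write-up therefore follows the paper's route and in fact supplies more detail on the abelian adaptation (the description of $\tau^{\flat}$ as an initial topology, the verification of the universal property, and the use of functional Hausdorffness for separation) than the paper itself provides.
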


By Lemma~\ref{l12}, we can easily obtain the following lemma.

\begin{lemma}\label{l21}
If $X$ is a Functionally Hausdorff space and $f: X\rightarrow Y$ is the continuous embedding mapping, then $f$ admits an extension
to a continuous monomorphism $\hat{f}$: $PG(X)\rightarrow G(Y)$.
\end{lemma}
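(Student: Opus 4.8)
The plan is to reduce the assertion to the analogous and well-understood fact for free \emph{topological} groups, using the reflexion $PG(X)^{\flat}$ as the bridge supplied by Lemma~\ref{l12}. Existence and continuity of an extension are immediate: since $Y$ embeds canonically in $G(Y)$ and a topological group is in particular a paratopological group, the composite $X\to Y\hookrightarrow G(Y)$ is a continuous map of $X$ into a paratopological group, so by the universal property of $FP(X)$ (resp. $AP(X)$) it extends to a continuous homomorphism $\hat f: PG(X)\to G(Y)$ agreeing with $f$ on $X$. The entire content of the lemma is therefore the injectivity of $\hat f$.

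To establish injectivity I would first factor $\hat f$ through the reflexion. Because $\tau^{\flat}\subseteq\tau$, the identity map $j: PG(X)\to PG(X)^{\flat}$ is a continuous isomorphism of abstract groups; and because the target $G(Y)$ carries a \emph{topological} group topology, the initial topology pulled back along the homomorphism $\hat f$ is itself a topological group topology on $PG(X)$, coarser than $\tau$ and hence coarser than $\tau^{\flat}$ by the maximality defining the reflexion. Consequently $\hat f$ remains continuous on $PG(X)^{\flat}$ and factors as $\hat f=g\circ j$ with $g: PG(X)^{\flat}\to G(Y)$ continuous. Since $j$ is bijective, $\hat f$ is injective exactly when $g$ is.

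Now Lemma~\ref{l12} furnishes a topological isomorphism $\Phi: PG(X)^{\flat}\to G(X)$ which is the identity on the free basis $X$, so $\tilde g=g\circ\Phi^{-1}: G(X)\to G(Y)$ is a continuous homomorphism restricting to $f$ on $X$. By freeness of $G(X)$ on $X$ this $\tilde g$ is the homomorphism $F_a(X)\to F_a(Y)$ (resp. $A_a(X)\to A_a(Y)$) induced by the injection $f$ of free bases; since $f$ is one-to-one it carries a word in normal form to a word in normal form with no new cancellation, so $\tilde g$ has trivial kernel and is a monomorphism. Hence $g$, and therefore $\hat f=g\circ j$, is injective, completing the argument. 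The only genuinely non-formal step is the factorization through $PG(X)^{\flat}$, i.e.\ that a continuous homomorphism from a paratopological group into a topological group automatically descends to the reflexion; this is precisely the defining maximality of $\tau^{\flat}$, after which Lemma~\ref{l12} translates the injectivity question into the standard statement that an embedding of spaces induces a monomorphism of the associated free topological groups.
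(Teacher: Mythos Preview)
Your proof is correct and follows essentially the same approach as the paper, which simply states that the lemma follows easily from Lemma~\ref{l12}. Your detailed argument---universal property for the continuous extension, factorization through $PG(X)^{\flat}$ via Lemma~\ref{l12}, and injectivity from the injection of free bases---spells out precisely what the paper leaves implicit.
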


\begin{theorem}\label{t10}
Let $X$ be a Functionally Hausdorff space, and $A$ be an arbitrary subset of $PG(X)$. If $A\cap B_{n}(X)$ is finite for each $n\in\mathbb{N}$, then $A$ is closed and discrete in $PG(X)$.
\end{theorem}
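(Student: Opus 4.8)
The plan is to reduce the assertion to the corresponding fact about free \emph{topological} groups and then transfer it across the group reflection. Since $X$ is functionally Hausdorff, Lemma~\ref{l12} furnishes a topological isomorphism between the group reflection $PG(X)^{\flat}$ and $G(X)$. Because $\tau^{\flat}\subset\tau$, the identity mapping $i\colon PG(X)\to PG(X)^{\flat}=G(X)$ — the identity on the common underlying abstract group — is a continuous isomorphism, exactly as in the proof of Proposition~\ref{p0}. I would then observe that $B_{n}(X)$ is defined purely algebraically, as the set of words of reduced length at most $n$, so it denotes one and the same subset whether viewed inside $PG(X)$ or inside $G(X)$. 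In particular the hypothesis that $A\cap B_{n}(X)$ be finite for every $n$ is a statement about the abstract group alone and is insensitive to which of the two topologies we put on it.

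Granting this, the first step is to show that $A$ is closed and discrete in the topological group $G(X)$. Here I would invoke the known analogue for free topological groups (see \cite{A2008}): over a Tychonoff space a subset meeting each $B_{n}$ in a finite set is closed and discrete. To cover the merely functionally Hausdorff case I would pass to the Tychonoff reflection $X'$ of $X$; since every topological group is a Tychonoff space, each continuous map of $X$ into a topological group factors through the reflection map $X\to X'$, which is moreover injective because continuous functions separate the points of $X$. Hence $G(X)$ and $G(X')$ have the same universal property and are topologically isomorphic, while the underlying abstract groups and the sets $B_{n}$ are identified; the Tychonoff case applied to $X'$ then yields that $A$ is closed and discrete in $G(X)$. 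The second step is the formal pull-back along $i$: continuity gives $i^{-1}(F)=F$ closed in $PG(X)$ for every closed $F\subset G(X)$, so $A$ is closed in $PG(X)$; and for each $a\in A$, choosing open $U\subset G(X)$ with $U\cap A=\{a\}$, the set $i^{-1}(U)=U$ is open in $PG(X)$ and still meets $A$ only in $a$, whence $A$ is discrete in $PG(X)$.

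The main obstacle is the topological-group input of the first step, namely that in $G(X)$ a subset meeting every $B_{n}$ finitely is closed and discrete, together with its extension from the Tychonoff to the functionally Hausdorff setting via the reflection $X'$. This is where the genuine content lies, since the construction of the separating neighbourhoods ultimately rests on Graev extensions of continuous (pseudo)metrics. Everything else — the passage through $PG(X)^{\flat}\cong G(X)$ and the pull-back of neighbourhoods along the continuous map $i$ — is routine, and the identification of $B_{n}(X)$ across the two groups is immediate because reduced length is an algebraic invariant.
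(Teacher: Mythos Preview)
Your proposal is correct, but it follows a genuinely different route from the paper. The paper does not pass through the group reflection $PG(X)^{\flat}\cong G(X)$ at all; instead it picks a topological embedding $f\colon X\to Y$ into a compact space $Y$ and, via Lemma~\ref{l21}, extends it to a continuous monomorphism $\hat f\colon PG(X)\to G(Y)$. Given any $B\subset A$, the image $\hat f(B)$ still meets each $B_{n}(Y)$ in a finite set; since $Y$ is compact, $G(Y)$ has the direct limit property, so $\hat f(B)$ is closed in $G(Y)$, and hence $B=\hat f^{-1}\bigl(\hat f(B)\bigr)$ is closed in $PG(X)$. Thus every subset of $A$ is closed, which yields both closedness and discreteness of $A$ in one stroke. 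Your argument instead black-boxes the topological-group step by citing the known analogue for $G(X')$ and handling the functionally Hausdorff case via the Tychonoff reflection; this is shorter and conceptually clean, but it relies on an external result whose standard proof is, in essence, exactly the compact-embedding argument the paper carries out explicitly. The paper's version is therefore more self-contained, needing only the direct limit property of $G(Y)$ for compact $Y$, while yours trades that detail for an appeal to the literature.
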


\begin{proof}
Let $f: X\rightarrow Y$ be a topological embedding of $X$ to a compact space $Y$. It follows from Lemma~\ref{l21} that we can extend $f$ to a continuous monomorphism $\hat{f}: PG(X)\rightarrow G(Y)$. Let $B\subset A$, and put $C=\hat{f}(B)$. Then $C\cap B_{n}(Y)$ is finite for each $n\in \mathbb{N}$. Since $Y$ is compact, $G(Y)$ has the direct limit property \cite{GM}. Therefore, $C$ is closed in $G(Y)$. Since $\hat{f}$ is a continuous monomorphism, we have that $B$ is closed in $PG(X)$. Hence, all subsets of $A$ are closed in $PG(X)$, which implies that $A$ is discrete.
\end{proof}

\begin{theorem}
If $X$ is a Functionally Hausdorff space, and $K$ is a countably compact subspace of $PG(X)$, then $K\subset B_{n}(X)$ for some $n\in \mathbb{N}$.
\end{theorem}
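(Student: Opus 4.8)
The plan is to argue by contradiction and reduce the statement to the previous result on closed discrete subsets (Theorem~\ref{t10}). Suppose that $K$ is \emph{not} contained in $B_{n}(X)$ for any $n\in\mathbb{N}$. Then for each $n$ we may select a point $a_{n}\in K\setminus B_{n}(X)$, i.e.\ an element of $K$ whose reduced length (with respect to the free basis $X$) exceeds $n$. Put $A=\{a_{n}:n\in\mathbb{N}\}$. The whole argument then consists in verifying that $A$ is an infinite, closed, discrete subspace of $PG(X)$ lying inside $K$, which is incompatible with countable compactness.

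First I would carry out the length bookkeeping that feeds into Theorem~\ref{t10}. For a fixed $m\in\mathbb{N}$, if $a_{n}\in B_{m}(X)$ then the reduced length of $a_{n}$ is at most $m$; but by construction that length is greater than $n$, so necessarily $n<m$. Hence $A\cap B_{m}(X)\subseteq\{a_{1},\dots,a_{m-1}\}$ is finite for every $m$. The same bound shows that the reduced lengths of the $a_{n}$ are unbounded, so $A$ contains elements of arbitrarily large length and is therefore infinite.

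Next, since $X$ is functionally Hausdorff and $A\cap B_{n}(X)$ is finite for each $n\in\mathbb{N}$, the previous theorem (Theorem~\ref{t10}) applies directly and yields that $A$ is closed and discrete in $PG(X)$. As $A\subseteq K$, it is in particular closed in the subspace $K$. Now I would invoke countable compactness: a closed subspace of a countably compact space is countably compact, so $A$ is countably compact; but $A$ is infinite and discrete, and an infinite discrete space is never countably compact, as witnessed by the countable open cover consisting of the increasing sets $A\setminus\{a_{n},a_{n+1},\dots\}$, which admits no finite subcover. This contradiction establishes that $K\subseteq B_{n}(X)$ for some $n\in\mathbb{N}$.

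The main point to emphasise is that the genuine difficulty has already been absorbed into Theorem~\ref{t10} (which itself rests on embedding $X$ into a compact space $Y$ via Lemma~\ref{l21} and on the direct limit property of $G(Y)$). Granting that, the only steps requiring care here are the finiteness of each $A\cap B_{m}(X)$ and the infiniteness of $A$, both handled by the elementary length estimate above; the closing contradiction is just the standard fact that a countably compact space contains no infinite closed discrete subspace. I do not expect any serious obstacle beyond keeping the length inequalities straight.
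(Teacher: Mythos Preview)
Your proof is correct and follows essentially the same route as the paper: assume by contradiction that $K\setminus B_{n}(X)\neq\emptyset$ for every $n$, extract a set $A\subset K$ meeting each $B_{n}(X)$ in a finite set, apply Theorem~\ref{t10} to conclude $A$ is closed and discrete, and contradict countable compactness. You have simply supplied more detail than the paper (the explicit construction of $A$, the length bookkeeping, and the final cover), which is all to the good.
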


\begin{proof}
Suppose that $K\setminus B_{n}(X)\neq\emptyset$ for each $n\in \mathbb{N}$. It is easy to see that we can find an infinite subset $A\subset K$ such that $A\cap B_{n}(X)$ is finite for each $n\in \mathbb{N}$. It follows from Theorem~\ref{t10}, Propositions~\ref{p0} and~\ref{p1} that $A$ is closed and discrete in $PG(X)$ and in $K$, which is a contradiction.
\end{proof}

\begin{theorem}
If $Y$ is a closed subspace of a Tychonoff space $X$, then the subgroup $PG(Y, X)$ of $PG(X)$ generated by $Y$ is closed in $PG(X)$.
\end{theorem}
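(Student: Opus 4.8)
The plan is to transport the problem to the free \emph{topological} group $G(X)$ along the canonical continuous isomorphism and then invoke the known closedness theorem there. As already used in the proof of Proposition~\ref{p0}, the identity mapping $i\colon PG(X)\to G(X)$ is a continuous isomorphism: the inclusion of $X$ into the topological (hence paratopological) group $G(X)$ is continuous, so by freeness of $PG(X)$ it extends to a continuous homomorphism $PG(X)\to G(X)$, and this extension is the identity on the common abstract group $F_{a}(X)$ (resp.\ $A_{a}(X)$). Thus $i$ carries the paratopological structure to the finer topological one without changing the underlying set or group operation.

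The external ingredient I would quote is the classical result for free topological groups: if $Y$ is a closed subspace of a Tychonoff space $X$, then the subgroup $G(Y,X)$ generated by $Y$ is closed in $G(X)$ (see \cite{A2008}). With this in hand, the argument closes quickly. Note that $PG(Y,X)$ and $G(Y,X)$ denote literally the same subset of the abstract group $F_{a}(X)$ (resp.\ $A_{a}(X)$), namely the subgroup generated by $Y$. Since $i$ acts as the identity on the underlying set, $i^{-1}(G(Y,X))=PG(Y,X)$. As $G(Y,X)$ is closed in $G(X)$ and $i$ is continuous, its preimage $PG(Y,X)$ is closed in $PG(X)$, which is exactly what we want.

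The main obstacle I anticipate is not the paratopological step, which is essentially formal once the continuity of $i$ is granted, but rather pinning down the two identifications that make it work: first, that the hypotheses of the cited free-topological-group theorem ($Y$ closed in a Tychonoff $X$) are precisely those of our statement and that its conclusion is genuinely the closedness of $G(Y,X)$ in $G(X)$; and second, that $PG(Y,X)$ and $G(Y,X)$ really coincide as abstract subgroups, so that the set-theoretic preimage $i^{-1}(G(Y,X))$ may be computed purely algebraically. Once these are verified, the closedness of $PG(Y,X)$ follows immediately by pulling back a closed set along the continuous monomorphism $i$.
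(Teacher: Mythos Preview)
Your proof is correct and in fact simpler than the paper's. You reduce directly to the classical free-topological-group theorem by pulling back along the continuous identity $i\colon PG(X)\to G(X)$: once one knows that $G(Y,X)$ is closed in $G(X)$ for $Y$ closed in a Tychonoff $X$ (as in \cite{A2008}), the preimage $i^{-1}(G(Y,X))=PG(Y,X)$ is closed and you are done. The paper instead embeds $X$ in a Hausdorff compactification $bX$, extends to a continuous monomorphism $\hat{f}\colon PG(X)\to G(bX)$ via Lemma~\ref{l21}, and establishes the closedness of $G(Z,bX)$ (with $Z=\overline{Y}^{\,bX}$) from the direct limit property of $G(bX)$, then intersects with $G(X,bX)$ and pulls back along $\hat{f}$. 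In effect the paper re-derives, inside its own framework, the closedness of the relevant subgroup rather than citing it wholesale; your route treats that result as a black box, which is shorter and perfectly legitimate given that the identity map $i$ is already used exactly this way in the proof of Proposition~\ref{p0}. The compactification detour buys the paper a bit more self-containment (only the direct limit property of $G(bX)$ for compact $bX$ is imported from \cite{GM}), while your argument buys clarity and brevity.
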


\begin{proof}
Let $bX$ be a Hausdorff compactification of $X$, and $f: X\rightarrow bX$ be a topological embedding of $X$ to a compact space $bX$. It follows from Lemma~\ref{l21} that we can extend $f$ to a continuous monomorphism $\hat{f}: PG(X)\rightarrow G(bX)$. Denote by $Z$ the closure of $Y$ in $bX$. It follows from the compactness of $Z$ that $G(Z, bX)\cap B_{n}(bX)$ is compact for each $n\in \mathbb{N}$. Since $bX$ is compact, $G(bX)$ has the direct limit property \cite{GM}, and hence $G(Z, bX)$ is closed in $G(bX)$. Thus $G(Y, bX)=G(Z, bX)\cap G(X, bX)$ is a closed subgroup of $G(X, bX)$. Since $\hat{f}$ is a continuous monomorphism, ones have $\hat{f}(PG(Y, X))=G(Y, bX)$. Therefore, $PG(Y, X)$ is closed in $PG(X)$.
\end{proof}
\bigskip

\section{The character of free Abelian paratopological groups}
Firstly, we give some technical lemmas.

The following two lemmas are essentially claims in the proof of Theorem 3.2 in \cite{RS}.

\begin{lemma}\label{l0}\cite{RS}
Let $\varrho$ be a quasi-pseudometric on $X$ bounded by 1. If $g$ is a reduced word in $F_{a}(X)$ distinct from $e$, then there exists an almost irreducible word $\mathscr{X}_{g}=x_{1}x_{2}\cdots x_{2n}$ of length $2n\geq 2$ in the alphabet $\tilde{X}$ and a scheme $\varphi_{g}\in\mathscr{S}_{n}$ that satisfy the following conditions:
\begin{enumerate}
\item for $i=1, 2, \cdots, 2n$, either $x_{i}$ is $e$ or $x_{i}$ is a letter in $g$;

\item $[\mathscr{X}_{g}]=g$ and $n\leq \ell(g)$; and

\item $N_{\rho}(g)=\Gamma_{\rho}(\mathscr{X}_{g}, \varphi_{g}).$
\end{enumerate}
\end{lemma}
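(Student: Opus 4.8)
The plan is to upgrade the infimum defining $N_{\rho}(g)$ to an attained minimum of the required shape. Write $\mathcal{F}$ for the family of all competitors, i.e.\ pairs $(\mathscr{X},\varphi)$ with $[\mathscr{X}]=g$, $\ell(\mathscr{X})=2n$ and $\varphi\in\mathscr{S}_{n}$. I would exhibit a list of surgeries on competitors that never increase $\Gamma_{\rho}$ and never change the reduced word, and that terminate in a pair which is almost irreducible, uses only the letters of $g$ together with $e$, and has length at most $2\ell(g)$. Let $\mathcal{F}_{0}\subseteq\mathcal{F}$ be the set of such pairs. Since there are only finitely many words of length $\le 2\ell(g)$ over the finite alphabet consisting of $e$ and the letters occurring in $g$, and only finitely many schemes of each such length, $\mathcal{F}_{0}$ is finite; because every competitor can be pushed into $\mathcal{F}_{0}$ without raising its cost, we get $\inf_{\mathcal{F}_{0}}\Gamma_{\rho}=\inf_{\mathcal{F}}\Gamma_{\rho}=N_{\rho}(g)$, and finiteness of $\mathcal{F}_{0}$ makes this a minimum, realized by some $(\mathscr{X}_{g},\varphi_{g})$. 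That single pair then witnesses (1), (2) and the equality (3) simultaneously.

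The core surgery removes a pair of consecutive mutually inverse letters. Suppose $x_{i}=u$ and $x_{i+1}=u^{-1}$ with $u\neq e$. If $\varphi(i)=i+1$, then this matched pair contributes $\tfrac12\bigl(\rho^{\ast}(u^{-1},u^{-1})+\rho^{\ast}(u,u)\bigr)=0$, so deleting positions $i,i+1$ (which cancel, leaving $[\mathscr{X}]$ unchanged) shortens the word and leaves $\Gamma_{\rho}$ fixed. If instead $j=\varphi(i)$ and $k=\varphi(i+1)$ lie outside $\{i,i+1\}$, I delete $i,i+1$ and rematch $j$ with $k$; the triangle inequality for $\rho^{\ast}$ gives
$$\rho^{\ast}(x_{j}^{-1},x_{k})+\rho^{\ast}(x_{k}^{-1},x_{j})\le \rho^{\ast}(x_{j}^{-1},u)+\rho^{\ast}(u,x_{k})+\rho^{\ast}(x_{k}^{-1},u^{-1})+\rho^{\ast}(u^{-1},x_{j}),$$
whose right-hand side is exactly twice the combined contribution of the old pairs $\{i,j\}$ and $\{i+1,k\}$, so $\Gamma_{\rho}$ does not increase. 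An analogous rerouting, applied after contracting the intervening $e$'s, disposes of any letter $x_{i}$ that is neither $e$ nor a letter of $g$ (such a letter must cancel in the reduction to $g$), which is what secures condition (1).

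The delicate step, and the one I expect to be the main obstacle, is checking that the rematched scheme is again a scheme, i.e.\ that replacing the two pairs $\{i,j\},\{i+1,k\}$ by the single pair $\{j,k\}$ keeps all intervals pairwise nested-or-disjoint. Here the adjacency of $i$ and $i+1$ is decisive: because $\varphi$ is already non-crossing, no surviving pair $\{a,b\}$ can have exactly one endpoint strictly between $j$ and $k$ and the other outside while straddling the gap at $i,i+1$ — such a pair would have crossed one of $\{i,j\},\{i+1,k\}$. I would make this precise by a short case analysis on the four positions of $j,k$ relative to $i<i+1$ (both left, both right, or one on each side), using the nesting/disjointness hypothesis to rule out every configuration that would cross the new interval spanned by $\{j,k\}$.

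Finally, iterating these surgeries terminates because each strictly shortens the word, and leaves an almost irreducible competitor over the alphabet $\{e\}\cup\{\text{letters of }g\}$. For the length bound (2) I would pass to such a competitor of minimal length realizing the value: it can contain no pair of matched copies of $e$ (those contribute $0$ and are removable) and, after the surgeries, no cancelling pair of non-$e$ letters, so its non-$e$ letters are precisely the $\ell(g)$ letters of the reduced word $g$. Each remaining $e$ is then matched to one of these $\ell(g)$ surviving letters, whence the total length is at most $2\ell(g)$, that is $n\le\ell(g)$. The pair $(\mathscr{X}_{g},\varphi_{g})$ so obtained is almost irreducible of length $2n\ge 2$, satisfies (1) and $[\mathscr{X}_{g}]=g$, and attains $N_{\rho}(g)=\Gamma_{\rho}(\mathscr{X}_{g},\varphi_{g})$, giving (3).
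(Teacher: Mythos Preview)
The paper does not prove this lemma: it is quoted verbatim from \cite{RS}, and the preceding sentence simply records that the two lemmas are ``essentially claims in the proof of Theorem~3.2'' there. There is therefore no argument in the present paper to compare yours against. Your surgery strategy---pushing an arbitrary competitor $(\mathscr{X},\varphi)$ into the finite subfamily $\mathcal{F}_{0}$ by cost-non-increasing moves, then taking a minimum---is the standard route to this Graev-type statement, and your treatment of the adjacent-pair reduction (the triangle-inequality estimate and the non-crossing verification for the rematched scheme) is sound.

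The one place your sketch is genuinely thin is the passage meant to secure condition~(1). After iterating the adjacent-pair surgery you land on an almost irreducible word, but such a word may still carry letters foreign to $g$, separated from their inverses by intervening $e$'s. Take $\mathscr{X}=a\,e\,a^{-1}\,e\,b\,e$ with $[\mathscr{X}]=b$ and the scheme $\varphi=\{\{1,6\},\{2,5\},\{3,4\}\}$. No adjacent non-$e$ pair cancels, so your basic surgery does not fire; and if you now delete the non-adjacent pair at positions $1$ and $3$ and rematch their partners $6$ and $4$, the new pair $\{4,6\}$ \emph{crosses} the surviving pair $\{2,5\}$, so the outcome is not a scheme. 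Thus the ``analogous rerouting, applied after contracting the intervening $e$'s'' cannot be a verbatim replay of the adjacent case, and the phrase hides real work: one must first rearrange $\varphi$ (at no extra cost, again via the triangle inequality) so that the offending block is matched internally before excising it, or argue in some other way that accounts for the $e$'s matched \emph{outside} the block. Once that step is written out properly, the rest of your outline---including the finiteness of $\mathcal{F}_{0}$ and the counting argument for $n\le \ell(g)$---goes through.
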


\begin{lemma}\label{l1}\cite{RS}
The family $\mathscr{N}=\{U_{\rho}(\varepsilon): \varepsilon >0\}$ is a base at the neutral element $e$ for a paratopological group topology $\mathscr{F}_{\rho}$ on $F_{a}(X)$, where $U_{\rho}(\varepsilon)=\{g\in F_{a}(X): N_{\rho}(g)<\varepsilon\}$. The restriction of $\mathscr{F}_{\rho}$ to $X$ coincides with the topology of the space $X$ generated by $\rho$.
\end{lemma}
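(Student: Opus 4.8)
The plan is to treat the two assertions separately: first I would check that $\mathscr{N}$ meets the standard neighbourhood criterion for a base at $e$ of a paratopological group topology, and then I would prove the sharp identity $N_{\rho}(x^{-1}y)=\rho(x,y)$ for all $x,y\in X$, from which the coincidence of the traces of the two topologies on $X$ is immediate. For the first part, each $U_{\rho}(\varepsilon)$ contains $e$ (since $N_{\rho}(e)=0$), and the family is totally ordered by inclusion, so it is a filter base. Subadditivity of $N_{\rho}$ gives $U_{\rho}(\varepsilon/2)\,U_{\rho}(\varepsilon/2)\subseteq U_{\rho}(\varepsilon)$, and for $g\in U_{\rho}(\varepsilon)$ it gives $g\,U_{\rho}(\varepsilon-N_{\rho}(g))\subseteq U_{\rho}(\varepsilon)$ (the local absorption needed for $\{gU:g\in F_{a}(X),U\in\mathscr{N}\}$ to be a base); invariance of $N_{\rho}$ gives $g\,U_{\rho}(\varepsilon)\,g^{-1}=U_{\rho}(\varepsilon)$ for every $g$. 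These are exactly the axioms for $\mathscr{N}$ to be a neighbourhood base at $e$ of a unique paratopological group topology $\mathscr{F}_{\rho}$, with left translations as homeomorphisms. Crucially, no symmetry of $N_{\rho}$ is invoked, so only joint continuity of multiplication is obtained, which is precisely the paratopological (not topological) setting.

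For the restriction to $X$ I would establish $N_{\rho}(x^{-1}y)=\rho(x,y)$ by two inequalities. The upper bound is explicit: for $x\neq y$ take the already reduced two-letter word $\mathscr{X}=x^{-1}y$ (so $[\mathscr{X}]=x^{-1}y$) with the unique scheme $\varphi$ on $\{1,2\}$. Unwinding the definition of $\rho^{\ast}$ gives $\Gamma_{\rho}(\mathscr{X},\varphi)=\tfrac12\bigl(\rho^{\ast}(x,y)+\rho^{\ast}(y^{-1},x^{-1})\bigr)=\rho(x,y)$, since both summands equal $\rho(x,y)$; taking the infimum yields $N_{\rho}(x^{-1}y)\le\rho(x,y)$.

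The reverse inequality is the crux, and here I would introduce a dominating homomorphism. Fix $a\in X$ and let $\phi_{a}\colon F_{a}(X)\to(\mathbb{R},+)$ be the homomorphism determined by $\phi_{a}(t)=\rho(a,t)$ for $t\in X$ (hence $\phi_{a}(t^{-1})=-\rho(a,t)$ and $\phi_{a}(e)=0$). The key point is that $\phi_{a}$ is nonexpansive from $(\tilde{X},\rho^{\ast})$ into the upper quasi-uniformity, i.e. $\phi_{a}(v)-\phi_{a}(u)\le\rho^{\ast}(u,v)$ for all $u,v\in\tilde{X}$; this is verified by a short case analysis using the triangle inequality for $\rho$ on the diagonal cases and the bound $\rho\le 1$ on the cross cases. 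Then, for any word $\mathscr{X}=x_{1}\cdots x_{2n}$ with $[\mathscr{X}]=g$ and any scheme $\varphi$, the fact that $\phi_{a}$ factors through reduction together with $\varphi$ being an involution gives $\phi_{a}(g)=\sum_{i}\phi_{a}(x_{i})=\tfrac12\sum_{i}\bigl(\phi_{a}(x_{\varphi(i)})-\phi_{a}(x_{i}^{-1})\bigr)\le\tfrac12\sum_{i}\rho^{\ast}(x_{i}^{-1},x_{\varphi(i)})=\Gamma_{\rho}(\mathscr{X},\varphi)$. Passing to the infimum yields $\phi_{a}(g)\le N_{\rho}(g)$, and specializing to $g=x^{-1}y$, $a=x$ gives $\rho(x,y)=\phi_{x}(x^{-1}y)\le N_{\rho}(x^{-1}y)$.

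Combining the bounds, $N_{\rho}(x^{-1}y)=\rho(x,y)$, so the basic $\mathscr{F}_{\rho}$-neighbourhood $x\,U_{\rho}(\varepsilon)\cap X=\{y\in X:N_{\rho}(x^{-1}y)<\varepsilon\}$ is exactly the ball $B_{\rho}(x,\varepsilon)$; as $x$ and $\varepsilon$ vary these two families coincide, whence $\mathscr{F}_{\rho}|_{X}=\mathscr{F}(\rho)$. I expect the main obstacle to be this lower bound: $N_{\rho}(x^{-1}y)$ is an infimum over all words reducing to $x^{-1}y$ and over all schemes, so a priori cancellations through auxiliary letters could push it strictly below $\rho(x,y)$, and the functional $\phi_{a}$ is precisely the device that rules this out. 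The only genuine computation is the nonexpansiveness of $\phi_{a}$ on $\tilde{X}$; alternatively one may invoke Lemma~\ref{l0} to reduce the infimum to words of length at most $2\ell(x^{-1}y)$ and inspect the finitely many resulting schemes directly.
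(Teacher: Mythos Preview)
The paper does not supply a proof of this lemma: it is quoted from \cite{RS} (the text introduces it as ``essentially a claim in the proof of Theorem~3.2 in \cite{RS}''), so there is no in-paper argument to compare against. Your proposal is, however, a complete and correct proof. The first part is routine and exactly right: subadditivity of $N_{\rho}$ gives the product axiom, invariance gives the conjugation axiom, and your local absorption argument handles openness; no inversion continuity is claimed, which is the point.

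Your treatment of the second part is the substantive contribution. The upper bound $N_{\rho}(x^{-1}y)\le\rho(x,y)$ via the two-letter word is immediate, and your computation of $\Gamma_{\rho}$ is correct. For the lower bound, the device of the additive homomorphism $\phi_{a}\colon F_{a}(X)\to(\mathbb{R},+)$ determined by $t\mapsto\rho(a,t)$ is elegant: the case check that $\phi_{a}(v)-\phi_{a}(u)\le\rho^{\ast}(u,v)$ on $\tilde{X}$ is straightforward (triangle inequality on $X\times X$ and $X^{-1}\times X^{-1}$, and the bounds $\rho\le 1$, $\rho^{\ast}=2$ on the cross cases), and the scheme-as-involution identity $\sum_{i}\phi_{a}(x_{i})=\tfrac12\sum_{i}\bigl(\phi_{a}(x_{\varphi(i)})-\phi_{a}(x_{i}^{-1})\bigr)$ is valid because $\phi_{a}$ is a homomorphism into an abelian group and $\varphi$ is a fixed-point-free involution. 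This yields $\phi_{a}(g)\le N_{\rho}(g)$ uniformly over all words and schemes, hence $\rho(x,y)=\phi_{x}(x^{-1}y)\le N_{\rho}(x^{-1}y)$.

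The argument in \cite{RS} proceeds instead by a direct combinatorial analysis of words and schemes (in the spirit of Lemma~\ref{l0}), reducing to short almost irreducible representatives. Your functional approach sidesteps that reduction entirely and gives a cleaner, coordinate-free lower bound; the price is that it relies on the abelianisation (since $(\mathbb{R},+)$ is abelian), but for the statement at hand---which only concerns elements of the form $x^{-1}y$---this is harmless.
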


\begin{lemma}\label{l2}\cite{F1982}
For every sequence $V_{0}, V_{1}, \cdots,$ of elements of a quasi-uniformity $\mathscr{U}$ on a set $X$, if $$V_{0}=X\times X\ \mbox{and}\ V_{i+1}\circ V_{i+1}\circ V_{i+1}\subset V_{i},\ \mbox{for}\ i\in \mathbb{N},$$ where `$\circ$' denotes the composition of entourages in the quasi-uniform space $(X, \mathscr{U})$, then there exists a quasi-pseudometric $\rho$ on the set $X$ such that, for each $i\in \mathbb{N}$, $$V_{i}\subset\{(x, y): \rho (x, y)\leq \frac{1}{2^{i}}\}\subset V_{i-1}.$$
\end{lemma}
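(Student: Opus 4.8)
The plan is to adapt the classical Frink--Kelley chaining construction of a (pseudo)metric from a nested sequence of entourages to the asymmetric setting, the only difference being that we never symmetrize, so that the resulting gauge is merely a quasi-pseudometric. First I would record two elementary facts about the sequence $(V_{i})$: every entourage of a quasi-uniformity contains the diagonal, so $(x,x)\in V_{i}$ for all $x$ and all $i$; and the hypothesis $V_{i+1}\circ V_{i+1}\circ V_{i+1}\subset V_{i}$ forces $V_{i+1}\subset V_{i}$, so the sequence is decreasing. Consequently the function
\[
g(x,y)=\inf\{2^{-i}: (x,y)\in V_{i}\}
\]
is well defined with values in $[0,1]$ (it equals $2^{-N}$ for the largest $N$ with $(x,y)\in V_{N}$, and $0$ if $(x,y)$ lies in every $V_{i}$), and it satisfies the crucial equivalence $g(x,y)\le 2^{-i}\iff (x,y)\in V_{i}$. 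Note that $g$ takes only the dyadic values $\{2^{-i}\}\cup\{0\}$ and that, since we do not symmetrize, $g$ need not be symmetric.

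Next I would define $\rho$ by chaining:
\[
\rho(x,y)=\inf\Big\{\sum_{k=0}^{n-1} g(z_{k},z_{k+1}) : n\ge 1,\ z_{0}=x,\ z_{n}=y,\ z_{k}\in X\Big\}.
\]
Concatenation of chains gives the triangle inequality $\rho(x,z)\le\rho(x,y)+\rho(y,z)$, and $\rho(x,x)=0$ because the diagonal lies in every $V_{i}$, so $\rho$ is a quasi-pseudometric on $X$ (symmetry is neither needed nor claimed). The one-step chain yields $\rho\le g$ immediately; combined with the equivalence above this already gives the left-hand inclusion, since $(x,y)\in V_{i}$ implies $g(x,y)\le 2^{-i}$ and hence $\rho(x,y)\le 2^{-i}$.

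The heart of the proof, and the step I expect to be the main obstacle, is the reverse comparison $g(x,y)\le 2\rho(x,y)$, which I would obtain from the chaining estimate
\[
g(z_{0},z_{n})\le 2\sum_{k=0}^{n-1} g(z_{k},z_{k+1})
\]
proved by induction on the chain length $n$. Writing $S$ for the right-hand sum, the case $S\ge \tfrac12$ is trivial because $g\le 1$; when $0<S<\tfrac12$ one picks the integer $i\ge 1$ with $2^{-i-1}\le S<2^{-i}$ and splits the chain at the largest index $m$ for which the initial sum is at most $S/2$. The inductive hypothesis bounds the $g$-values of the two sub-chains $z_{0}\cdots z_{m}$ and $z_{m+1}\cdots z_{n}$ by $2\cdot(S/2)=S<2^{-i}$, and the single middle edge $g(z_{m},z_{m+1})$ is at most $S<2^{-i}$ as well. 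Here the dyadic nature of $g$ is decisive: a value strictly below $2^{-i}$ is automatically at most $2^{-i-1}$, so all three pieces lie in $V_{i+1}$, and the triple-composition hypothesis $V_{i+1}\circ V_{i+1}\circ V_{i+1}\subset V_{i}$ then places $(z_{0},z_{n})$ in $V_{i}$, giving $g(z_{0},z_{n})\le 2^{-i}\le 2S$. This is exactly the point where the ``power three'' in the hypothesis (rather than a mere square) is used.

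Finally the estimate $g\le 2\rho$ delivers the right-hand inclusion: if $\rho(x,y)\le 2^{-i}$ then $g(x,y)\le 2^{-i+1}=2^{-(i-1)}$, so $(x,y)\in V_{i-1}$. Together with the left-hand inclusion from the second paragraph this establishes $V_{i}\subset\{(x,y):\rho(x,y)\le 2^{-i}\}\subset V_{i-1}$ for every $i$, completing the proof. The only genuinely delicate computation is the induction in the third paragraph; everything else is a routine transcription of the symmetric Frink--Kelley argument with the symmetry axiom deleted.
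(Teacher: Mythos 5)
Your proof is correct and is essentially the argument behind the cited result: the paper gives no proof of this lemma, referring instead to Fletcher--Lindgren \cite{F1982}, whose proof is exactly this Frink--Kelley chaining construction with the symmetrization step deleted, so your reconstruction matches the intended argument (including the decisive use of the dyadic values of $g$ and of the triple composition $V_{i+1}\circ V_{i+1}\circ V_{i+1}\subset V_{i}$). One small case to patch: your induction covers $S\geq\tfrac{1}{2}$ and $0<S<\tfrac{1}{2}$ but not $S=0$, which can occur (all edges of the chain in $\bigcap_{i}V_{i}$) and is genuinely needed for $g\leq 2\rho$; it is handled by the same device, splitting such a chain into three shorter pieces, applying the inductive hypothesis to place them in $V_{i+1}$ for every fixed $i$, and invoking the triple-composition hypothesis to conclude $g(z_{0},z_{n})=0$.
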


\begin{lemma}\label{l3}\cite{LF}
For every quasi-uniformity $\mathscr{V}$ on a set $X$ and each $V\in \mathscr{V}$ there exists a quasi-pseudometric $\rho$ bounded by 1 on $X$ which is quasi-uniform with respect to $\mathscr{V}$ and satisfies the condition $$\{(x, y): \rho (x, y)< 1\}\subset V.$$
\end{lemma}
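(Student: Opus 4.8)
The plan is to deduce the statement from the quasi-uniform metrization Lemma~\ref{l2}, whose chaining construction does all the real work; what remains is to feed it the right sequence of entourages and then rescale. Given $V\in\mathscr{V}$, I would build a decreasing sequence $V_{0}\supset V_{1}\supset V_{2}\supset\cdots$ in $\mathscr{V}$ with $V_{0}=X\times X$, $V_{1}=V$, and $V_{i+1}\circ V_{i+1}\circ V_{i+1}\subset V_{i}$ for every $i$. The only point to check here is that each entourage admits such a ``cube root'' inside $\mathscr{V}$: given $V_{i}$, the quasi-uniformity axioms provide $W\in\mathscr{V}$ with $W\circ W\subset V_{i}$ and then $W'\in\mathscr{V}$ with $W'\circ W'\subset W$; replacing $W'$ by $W'\cap W$ we may assume $W'\subset W\subset V_{i}$ (using $\Delta\subset W$), and then $W'\circ W'\circ W'\subset W\circ W'\subset W\circ W\subset V_{i}$, so $V_{i+1}:=W'$ works. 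The condition $V_{1}\circ V_{1}\circ V_{1}\subset V_{0}=X\times X$ is automatic.

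Applying Lemma~\ref{l2} to this sequence yields a quasi-pseudometric $\rho$ on $X$ with $V_{i}\subset\{(x,y):\rho(x,y)\le 2^{-i}\}\subset V_{i-1}$ for every $i\ge 1$. Two consequences are immediate. First, $\rho$ is quasi-uniform with respect to $\mathscr{V}$: for any $\varepsilon>0$ pick $i$ with $2^{-i}<\varepsilon$, so that $V_{i}\subset\{(x,y):\rho(x,y)<\varepsilon\}$, and since $\mathscr{V}$ is a filter this ball lies in $\mathscr{V}$. Second, taking $i=2$ in the right-hand inclusion gives $\{(x,y):\rho(x,y)\le 1/4\}\subset V_{1}=V$, which is the separation property I am after, up to a scale factor.

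The remaining (and only mildly delicate) step is the scale mismatch: Lemma~\ref{l2} delivers $V$ as the \emph{target} of a ball of radius $1/4$, not of radius $1$, so I would replace $\rho$ by $\rho'=\min\{1,4\rho\}$. Truncating a positive multiple of a quasi-pseudometric at the constant $1$ again gives a quasi-pseudometric---the triangle inequality survives because $\min\{1,a+b\}\le\min\{1,a\}+\min\{1,b\}$, and $\rho'(x,x)=0$---and $\rho'$ is bounded by $1$ by construction. For $\varepsilon\le 1$ one has $\{(x,y):\rho'(x,y)<\varepsilon\}=\{(x,y):\rho(x,y)<\varepsilon/4\}\in\mathscr{V}$ by the previous paragraph, so $\rho'$ is still quasi-uniform with respect to $\mathscr{V}$; and $\{(x,y):\rho'(x,y)<1\}=\{(x,y):\rho(x,y)<1/4\}\subset\{(x,y):\rho(x,y)\le 1/4\}\subset V$, as required. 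The substantive obstacle is entirely inside Lemma~\ref{l2}; at this level the only things to watch are the existence of cube roots in a general quasi-uniformity and this bookkeeping with the factor of $4$.
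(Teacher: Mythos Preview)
Your argument is correct. The paper does not supply its own proof of this lemma---it is quoted from \cite{LF}---so there is nothing to compare against directly, but the route you take (feed a nested sequence into the quasi-metrization Lemma~\ref{l2} and then truncate a scalar multiple at $1$) is exactly the standard one and is almost certainly what the cited source does as well. The bookkeeping is right: setting $V_{1}=V$ forces you to read off the inclusion at level $i=2$, whence the factor $4$; your cube-root construction and the verification that $\min\{1,4\rho\}$ remains a $\mathscr{V}$-quasi-uniform quasi-pseudometric are both fine.
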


\begin{lemma}\label{l4}\cite{LF}
Suppose that $\rho$ is a quasi-pseudometric on a set $X$, and suppose that $m_{1}x_{1}+\cdots +m_{n}x_{n}$ is the normal form of an element $h\in F_{a}(X)\setminus\{e\}$ of the length $l=\sum_{i=1}^{n}|m_{i}|$. Then there is a representation
$$h=(-u_{1}+v_{1})+\cdots +(-u_{k}+v_{k}),$$
where $2k=l$ if $l$ is even and $2k=l+1$ if $l$ is odd, $u_{1}, v_{1}, \cdots , u_{k}, v_{k}\in\{\pm x_{1}, \cdots , \pm x_{n}\}$ (but $v_{k}=e$ if $l$ is odd), and such that
$$\hat{\rho}_{A}(e, h)=\sum_{i=1}^{k}\rho^{\ast}(u_{i}, v_{i}).$$

In addition, if $\hat{\rho}_{A}(e, h)<1$, then $l=2k$, and one can choose $y_{1}, z_{1}, \cdots , y_{k}, z_{k}\in\{x_{1}, \cdots , x_{n}\}$ such that
$h=(-y_{1}+z_{1})+\cdots +(-y_{k}+z_{k})$  and
$\hat{\rho}_{A}(e, h)=\sum_{i=1}^{k}\rho^{\ast}(y_{i}, z_{i})$.
\end{lemma}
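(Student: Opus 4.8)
The plan is to reformulate the Abelian Graev prenorm $\hat{\rho}_{A}(e,h)$, which a priori is only an \emph{infimum} of the quantities $\Gamma_{\rho}(\mathscr{X},\varphi)$ over words $\mathscr{X}$ reducing to $h$ and schemes $\varphi$, as an honest \emph{minimum} over pairings of the letters of $h$, and then to read both representations off an optimal pairing. Two features of the Abelian setting do the work. First, since $A_{a}(X)$ is commutative the order of letters is irrelevant to the reduction, so the non-crossing condition in the definition of a scheme is vacuous and the scheme attached to a word of even length may be taken to be an \emph{arbitrary} partition of its positions into two-element blocks. Second, a direct check from the definition of $\rho^{\ast}$ gives the symmetry $\rho^{\ast}(u,v)=\rho^{\ast}(v^{-1},u^{-1})$ for all $u,v\in\tilde{X}$ (the cases ``both in $X\cup\{e\}$'', ``both in $X^{-1}\cup\{e\}$'', and ``mixed'' are each immediate). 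Hence if a block pairs the positions carrying letters $x_{a},x_{b}$, then writing $u:=x_{a}^{-1}$ and $v:=x_{b}$ the block's contribution $\tfrac12\bigl(\rho^{\ast}(x_{a}^{-1},x_{b})+\rho^{\ast}(x_{b}^{-1},x_{a})\bigr)$ to $\Gamma_{\rho}$ collapses to the single term $\rho^{\ast}(u,v)$, while $-u+v=x_{a}+x_{b}$ is exactly the block's contribution to the Abelian sum. Summing over blocks yields $\Gamma_{\rho}(\mathscr{X},\varphi)=\sum_{i}\rho^{\ast}(u_{i},v_{i})$ together with $h=\sum_{i}(-u_{i}+v_{i})$.

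To produce the first representation I would first secure an optimal word from the Abelian analogue of Lemma~\ref{l0} (valid for $A_{a}(X)$): there is an almost irreducible word $\mathscr{X}_{h}$ and a pairing $\varphi_{h}$ with $\hat{\rho}_{A}(e,h)=\Gamma_{\rho}(\mathscr{X}_{h},\varphi_{h})$ whose non-neutral letters are drawn from the signed generators actually occurring in the normal form $m_{1}x_{1}+\cdots+m_{n}x_{n}$. Since for each $j$ only the single sign $\mathrm{sign}(m_{j})$ is available, no cancellation among real letters is possible, and the multiset of non-neutral letters is forced to be exactly these $l=\sum_{i}|m_{i}|$ letters, the rest being copies of $e$. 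I then trim the surplus neutral letters: a block pairing two copies of $e$ costs $0$ and is deleted, and if $l$ is even each real letter matched to an $e$ can instead be re-paired with another such real letter without raising the cost, because an $e$-block costs exactly $1$ while a real block costs at most $2$; when $l$ is odd exactly one $e$ must remain. This gives a pairing on exactly $l$ real letters, plus one $e$ precisely when $l$ is odd, of cost $\le\hat{\rho}_{A}(e,h)$, hence equal to it. Applying the collapse of the first paragraph and orienting the lone $e$-block so that $v_{k}=e$ yields $h=\sum_{i=1}^{k}(-u_{i}+v_{i})$ with $u_{i},v_{i}\in\{\pm x_{1},\dots,\pm x_{n}\}$, $2k=l$ or $l+1$ according to the parity of $l$, and $\hat{\rho}_{A}(e,h)=\sum_{i}\rho^{\ast}(u_{i},v_{i})$.

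For the refinement I would exploit the coarse lower bounds on $\rho^{\ast}$: from its definition a block involving $e$ costs exactly $1$, and a mixed block (one letter in $X$, the other in $X^{-1}$) costs $2$. Thus if $\hat{\rho}_{A}(e,h)<1$, no block of the optimal pairing can involve $e$ (forcing $l$ even, $l=2k$) and none can be mixed, so every block has both letters in $X$ or both in $X^{-1}$. A both-positive block $(u_{i},v_{i})=(x_{a},x_{b})$ is already of the form $-y_{i}+z_{i}$ with $y_{i}=x_{a},z_{i}=x_{b}\in\{x_{1},\dots,x_{n}\}$; a both-negative block $(x_{a}^{-1},x_{b}^{-1})$ contributes $x_{a}-x_{b}=-x_{b}+x_{a}$, so setting $y_{i}=x_{b},z_{i}=x_{a}$ rewrites it as $-y_{i}+z_{i}$, and the symmetry identity gives $\rho^{\ast}(x_{a}^{-1},x_{b}^{-1})=\rho^{\ast}(x_{b},x_{a})=\rho^{\ast}(y_{i},z_{i})$, leaving the cost unchanged. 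Collecting the blocks produces $h=\sum_{i=1}^{k}(-y_{i}+z_{i})$ with all $y_{i},z_{i}\in\{x_{1},\dots,x_{n}\}$ and $\hat{\rho}_{A}(e,h)=\sum_{i}\rho^{\ast}(y_{i},z_{i})$.

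The main obstacle is the passage from infimum to minimum at the sharp length $l$ (respectively $l+1$): this rests first on the Abelian form of Lemma~\ref{l0} to guarantee an optimal almost irreducible word at all, and then on the bookkeeping that removes surplus neutral letters without raising the cost and orients each block so that the signs land exactly in the stated normal form. The symmetry $\rho^{\ast}(u,v)=\rho^{\ast}(v^{-1},u^{-1})$ and the two coarse values ($1$ for neutral blocks, $2$ for mixed blocks) are the elementary ingredients that make both the exact-length count and the $<1$ refinement go through; everything else is routine verification.
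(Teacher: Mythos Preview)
The paper does not supply its own proof of this lemma: it is quoted verbatim from \cite{LF} and stated without argument, so there is nothing in the present paper to compare your proposal against line by line. That said, your outline is a faithful reconstruction of the standard argument and is essentially correct. The key moves --- invoking the Abelian form of Lemma~\ref{l0} to realize the infimum, using commutativity to replace schemes by arbitrary pairings, collapsing each block via the identity $\rho^{\ast}(u,v)=\rho^{\ast}(v^{-1},u^{-1})$, and then trimming surplus $e$'s by the ``two $e$-blocks cost $2$, one real block costs $\le 2$'' trade --- are exactly what is needed, and the refinement under $\hat{\rho}_{A}(e,h)<1$ via the coarse values $1$ and $2$ is clean.

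Two small points worth tightening if you write this out in full. First, when you assert the Abelian analogue of Lemma~\ref{l0}, you should make explicit why the non-neutral letters of the optimal almost irreducible word are \emph{exactly} the $l$ signed letters of the normal form (with the correct multiplicities): this follows because the available letters carry a single sign $\mathrm{sign}(m_{j})$ per generator, so no Abelian cancellation among real letters is possible and the multiset is forced; you say this, but it deserves one sentence of justification rather than a parenthetical. Second, in the trimming step you should note that after re-pairing, the resulting word-and-pairing is still admissible in the definition of $N_{\rho}$ (the word still reduces to $h$ since only neutral letters were removed or shuffled), so its cost is bounded below by $\hat{\rho}_{A}(e,h)$; together with ``cost did not increase'' this pins the value, and you use this implicitly.
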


\begin{theorem}\label{t0}\cite{LF}
Let $X$ be a Tychonoff space, and
let $\mathscr{P}_{X}$ be the family of all continuous quasi-pseudometrics from $(X\times X, \mathscr{U}_{X}^{-1}\times \mathscr{U}_{X})$ to $(\mathbb{R}, \mathscr{U}^{\star})$ which are bounded by 1. Then the sets $$V_{\rho}=\{g\in AP(X): \hat{\rho}_{A}(e, g)<1\}$$ with $\rho\in\mathscr{P}_{X}$ form a local base at the neutral element $e$ of $AP(X)$.
\end{theorem}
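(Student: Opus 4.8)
The plan is to prove the two halves of ``base at $e$'' separately: that each $V_{\rho}$ is a neighbourhood of $e$ in $AP(X)$, and that every neighbourhood of $e$ contains some $V_{\rho}$. For the first half, fix $\rho\in\mathscr{P}_{X}$. By the Abelian analogue of Lemma~\ref{l1}, the sets $U_{\rho}(\varepsilon)=\{g:\hat{\rho}_{A}(e,g)<\varepsilon\}$ form a base at $e$ for a paratopological group topology $\mathscr{F}_{\rho}$ on $A_{a}(X)$ whose trace on $X$ is the topology generated by $\rho$. Since $\rho$ is continuous into $(\mathbb{R},\mathscr{U}^{\star})$ in its second variable, each ball $\{y:\rho(x,y)<r\}$ is open in $X$, so the $\rho$-topology is coarser than the original topology of $X$; hence the inclusion of $X$ (with its original topology) into $(A_{a}(X),\mathscr{F}_{\rho})$ is continuous. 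By the universal property defining the free Abelian paratopological group, this inclusion extends to a continuous homomorphism $AP(X)\rightarrow(A_{a}(X),\mathscr{F}_{\rho})$; being the identity on the generating set $X$, it is the identity map, so $\mathscr{F}_{\rho}\subseteq\tau(AP(X))$. In particular $V_{\rho}=U_{\rho}(1)$ is open in $AP(X)$.

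For the second half, let $W$ be an open neighbourhood of $e$. First I would use continuity of addition to choose open neighbourhoods $W=U_{0}\supseteq U_{1}\supseteq\cdots$ of $e$ with $U_{n+1}+U_{n+1}+U_{n+1}\subseteq U_{n}$ for all $n$, and then pass to the difference entourages $V_{n}=\{(x,y)\in X\times X:-x+y\in U_{n}\}$. A direct check shows $V_{n+1}\circ V_{n+1}\circ V_{n+1}\subseteq V_{n}$ (the intermediate points cancel telescopically), so after prefixing $X\times X$ the sequence $(V_{n})$ satisfies the hypotheses of Lemma~\ref{l2}; that lemma produces a quasi-pseudometric $\rho$ on $X$, bounded by $1$, with $V_{i}\subseteq\{(x,y):\rho(x,y)\le 2^{-i}\}\subseteq V_{i-1}$ for all $i$.

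The step I expect to be the main obstacle is to verify that this $\rho$ actually lies in $\mathscr{P}_{X}$, i.e. that it is continuous from $(X\times X,\mathscr{U}_{X}^{-1}\times\mathscr{U}_{X})$ to $(\mathbb{R},\mathscr{U}^{\star})$. The forward slices $V_{n}[x]=(x+U_{n})\cap X$ are neighbourhoods of $x$ in the original topology, while the backward slices $V_{n}^{-1}[x]=(x-U_{n})\cap X$ are neighbourhoods of $x$ in the trace on $X$ of the conjugate topology of $AP(X)$ --- it is precisely this asymmetry, caused by the failure of inversion to be continuous, that is recorded by the factor $\mathscr{U}_{X}^{-1}$. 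What must be shown is that the difference entourages $V_{n}$ belong to the universal quasi-uniformity $\mathscr{U}_{X}$, equivalently that the quasi-uniformity they generate is compatible with the topology of $X$; this is where the Tychonoff hypothesis and the freeness of $AP(X)$ are used, through the coincidence of the quasi-uniformity induced on $X$ by $AP(X)$ with $\mathscr{U}_{X}$ (cf. Lemma~\ref{l3}). Granting this, the triangle inequality $\rho(x',y')\le\rho(x',x)+\rho(x,y)+\rho(y,y')$ yields the required continuity, so $\rho\in\mathscr{P}_{X}$.

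It then remains to see that $V_{\rho}\subseteq W$. Given $h\in V_{\rho}$ we have $\hat{\rho}_{A}(e,h)<1$, so by Lemma~\ref{l4} we may write $h=(-y_{1}+z_{1})+\cdots+(-y_{k}+z_{k})$ with $y_{j},z_{j}\in X$ and $\sum_{j}\rho(y_{j},z_{j})=\hat{\rho}_{A}(e,h)<1$. For each $j$ I would choose $i_{j}$ with $\rho(y_{j},z_{j})\le 2^{-i_{j}}$, so that $-y_{j}+z_{j}\in U_{i_{j}-1}$ while $\sum_{j}2^{-i_{j}}$ stays controlled by $\sum_{j}\rho(y_{j},z_{j})$. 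A standard Graev-type dyadic summation argument --- grouping the pieces and collapsing them by repeated use of $U_{n+1}+U_{n+1}+U_{n+1}\subseteq U_{n}$, with commutativity of $AP(X)$ keeping the reordering harmless --- then places $h$ in $W$; the finitely many boundary constants are absorbed by inserting a fixed number of extra neighbourhoods at the top of the chain. This gives $V_{\rho}\subseteq W$ and completes the proof that $\{V_{\rho}:\rho\in\mathscr{P}_{X}\}$ is a base at $e$.
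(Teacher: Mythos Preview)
The paper does not prove this theorem; it is quoted verbatim from \cite{LF} and used as a black box in Section~4. There is therefore no in-paper argument to compare your proposal against.

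That said, your outline is the standard Graev-type argument and is essentially the one in \cite{LF}. Two remarks on the places you flag as uncertain. First, the ``main obstacle'' you identify---that the difference entourages $V_{n}=\{(x,y):-x+y\in U_{n}\}$ lie in $\mathscr{U}_{X}$---is lighter than you suggest: the sequence $(V_{n})$ satisfies $V_{n+1}\circ V_{n+1}\circ V_{n+1}\subseteq V_{n}$ and each forward section $V_{n}[x]=(x+U_{n})\cap X$ is a neighbourhood of $x$, so the quasi-uniformity it generates induces a topology coarser than that of $X$; since $\mathscr{U}_{X}$ is the finest compatible quasi-uniformity, it absorbs any such quasi-uniformity (take the supremum with the Pervin quasi-uniformity to land exactly on $\tau(X)$). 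No appeal to a separate ``coincidence'' theorem is needed, and Lemma~\ref{l3} is not really the right reference here. Second, the dyadic collapsing you sketch at the end is exactly the content of Lemma~\ref{l5}(2): once you have $\sum_{j}2^{-i_{j}}<1$ you get $h\in U_{i_{1}-1}+\cdots+U_{i_{k}-1}\subseteq U_{0}=W$ directly, with commutativity of $AP(X)$ making the reordering free; you might cite that lemma rather than describing the argument informally.
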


\begin{lemma}\label{l5}\cite{A2008}
Let $k\in\omega, p, k_{1},\cdots, k_{p}\in\mathbb{N}$ such that $\sum_{i=1}^{p}2^{-k_{i}}<2^{-k}$. Then we have
\begin{enumerate}
\item If $(X, \mathscr{U})$ is a quasi-uniform space and $\{U_{n}: n\in\omega\}$ a countable subcollection of $\mathscr{U}$ such that $U_{n+1}\circ U_{n+1}\circ U_{n+1}\subset U_{n}$ for each $n\in\omega$, then $U_{k_{1}}\circ\cdots\circ U_{k_{p}}\subset U_{k}$;

\item If $\{V_{i}: i\in\omega\}$ is a sequence of subsets of a group $G$ with the identity $e$ such that $e\in V_{i}$ and $V_{i+1}^{3}\subset V_{i}$ for each $i\in \omega$, then we have $V_{k_{1}}\cdots V_{k_{n}}\subset V_{r}$.
\end{enumerate}
\end{lemma}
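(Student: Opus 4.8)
The plan is to prove both items at once, since (2) is the exact multiplicative transcription of (1): the composition $\circ$ becomes the group product, the inclusion $\Delta\subset U_n$ (every entourage contains the diagonal $\Delta$) becomes $e\in V_n$, and the threefold refinement $U_{n+1}\circ U_{n+1}\circ U_{n+1}\subset U_n$ becomes $V_{n+1}^{3}\subset V_n$. (I read the conclusion of (2) as $V_{k_1}\cdots V_{k_p}\subset V_k$, the subscripts $V_r$ and $V_{k_n}$ being evident typos.) I would therefore establish the single statement, from which the lemma follows a fortiori since its hypothesis is strict: \emph{if $\sum_{i=1}^{p}2^{-k_i}\le 2^{-k}$ then $U_{k_1}\circ\cdots\circ U_{k_p}\subset U_k$.} First I record that the refinement forces the sequence to be decreasing, $U_{n+1}\subset U_n$, because $U_{n+1}=U_{n+1}\circ\Delta\circ\Delta\subset U_{n+1}\circ U_{n+1}\circ U_{n+1}\subset U_n$. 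This settles the base case $p=1$: there $2^{-k_1}\le 2^{-k}$ gives $k_1\ge k$ and hence $U_{k_1}\subset U_k$.

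For the inductive step I argue by strong induction on $p$, and the first key move is a \emph{normalization}: let $t\ge k$ be the largest integer with $\sum_i 2^{-k_i}\le 2^{-t}$ (it exists, since $\sum_i 2^{-k_i}>0$). Because the sequence is decreasing it suffices to prove $U_{k_1}\circ\cdots\circ U_{k_p}\subset U_t$. Maximality of $t$ yields $2^{-(t+1)}<\sum_i 2^{-k_i}\le 2^{-t}$. When $p\ge 2$ no single term can equal $2^{-t}$ (that would force the rest to vanish), so every $k_i\ge t+1$, i.e. every weight $2^{-k_i}$ is $\le C:=2^{-(t+1)}$. I then partition $\{1,\dots,p\}$ greedily by \emph{next-fit} into consecutive blocks of total weight $\le C$: reading the indices left to right and keeping a running sum, open a new block whenever the next weight would push the running sum above $C$. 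The heart of the argument is that this produces between two and three blocks. At least two, since the total weight exceeds $C$ and so cannot fit in one block; at most three, since whenever a block is closed the triggering weight did not fit, whence the weight of any block plus that of its successor exceeds $C$, and four or more blocks would force total weight $>2C=2^{-t}$, contradicting the normalization. As there are two or three blocks, each contains strictly fewer than $p$ factors, so the induction hypothesis applies to each: a block of weight $\le 2^{-(t+1)}$ composes into a subset of $U_{t+1}$. Padding with $\Delta\subset U_{t+1}$ if there are only two blocks, I obtain
\[
U_{k_1}\circ\cdots\circ U_{k_p}\subset U_{t+1}\circ U_{t+1}\circ U_{t+1}\subset U_t\subset U_k .
\]

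The main obstacle — and the reason a naive two-way split fails — is that a single term may be as large as $2^{-(t+1)}=C$ and may sit in the middle, so one cannot in general cut the product into two halves each of weight $\le C$ (e.g.\ weights $\tfrac14,\tfrac12,\tfrac14$ with target $U_0$). The two devices above are precisely what circumvents this: the normalization to $t$ pins the total weight into $(C,2C]$, which guarantees the greedy partition never collapses to a single block — the case that would stall the induction on $p$ — while the next-fit overlap estimate caps the number of blocks at three, exactly matching the threefold refinement $U_{t+1}^{3}\subset U_t$. The identical three-block decomposition, reading $e\in V_{t+1}$ in place of $\Delta\subset U_{t+1}$ and $V_{t+1}^{3}\subset V_t$ in place of the composition inclusion, proves (2).
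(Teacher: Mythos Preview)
Your argument is correct, including your reading of the typos in item~(2). Note, however, that the paper itself does not prove this lemma: it is simply quoted from \cite{A2008}, where it is the standard combinatorial estimate underlying the Birkhoff--Kakutani/Alexandroff--Urysohn metrization lemma. The textbook argument differs from yours in one detail. Rather than first normalizing $k$ to the optimal $t$ and then running a greedy next-fit partition, it works directly with the given $k$ and splits the word at a \emph{single} distinguished index: take $m$ to be the largest element of $\{0,1,\dots,p-1\}$ with $\sum_{i\le m}2^{-k_i}<2^{-(k+1)}$; then the left block $U_{k_1}\circ\cdots\circ U_{k_m}$ falls into $U_{k+1}$ by the inductive hypothesis (it has at most $p-1$ factors by construction), the middle singleton $U_{k_{m+1}}$ lies in $U_{k+1}$ because $2^{-k_{m+1}}<2^{-k}$ forces $k_{m+1}\ge k+1$, and maximality of $m$ gives $\sum_{i>m+1}2^{-k_i}<2^{-k}-2^{-(k+1)}=2^{-(k+1)}$, so the right block also lands in $U_{k+1}$. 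Because the left block is capped at $p-1$ terms \emph{by fiat}, the induction never stalls and no normalization is needed. Your approach is a legitimate variant: the normalization to $t$ is precisely what guarantees the greedy partition produces at least two blocks, and your overlap estimate bounding the number of blocks by three makes the match with the threefold refinement $U_{t+1}\circ U_{t+1}\circ U_{t+1}\subset U_t$ especially transparent.
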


\begin{lemma}\label{l6}
Let $m_{1}x_{1}+\cdots+m_{n}x_{n}$ be the normal form of an element $g\in A_{a}(X)\setminus\{e\}$ and let $d$ be a quasi-pseudometric on $X$. If $\sum_{i=1}^{n}m_{i}=0$, then there is an reduced representation of $g$ in the form $$g=(-z_{1}+t_{1})+\cdots(-z_{k}+t_{k})$$ such that $2k=\sum_{i=1}^{n}|m_{i}|$, $z_{j}, t_{j}\in\{x_{1}, \cdots, x_{n}\}$ for each $j\leq k$ and $\hat{d}_{A}(e, g)=\sum_{j=1}^{k}d(z_{j}, t_{j})$.
\end{lemma}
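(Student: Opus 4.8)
The plan is to derive the statement from Lemma~\ref{l4} (applied with $\rho=d$), replacing the smallness hypothesis $\hat{\rho}_{A}(e,h)<1$ used there by the balance condition $\sum_{i=1}^{n}m_{i}=0$. First I would record that this condition forces $l:=\sum_{i=1}^{n}|m_{i}|$ to be even: if $P$ and $Q$ denote the numbers of positive and negative letters of the normal form of $g$, then $P-Q=\sum_{i=1}^{n}m_{i}=0$ and $P+Q=l$, so $P=Q=l/2=:k$. Since $l$ is even, Lemma~\ref{l4} provides a representation
\[
g=(-u_{1}+v_{1})+\cdots+(-u_{k}+v_{k}),\qquad u_{j},v_{j}\in\{\pm x_{1},\ldots,\pm x_{n}\},
\]
with $2k=l$, no letter equal to $e$, and $\hat{d}_{A}(e,g)=\sum_{j=1}^{k}d^{\ast}(u_{j},v_{j})$, where $d^{\ast}$ is the extension of $d$ to $\tilde{X}$; in particular this representation attains the infimum defining $\hat{d}_{A}(e,g)$. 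Because it consists of exactly $l=2k$ signed letters whose reduced sum is $g$, no cancellation can occur, so it is automatically reduced. I would also keep in mind that, as throughout the construction of $\hat{d}_{A}$, the quasi-pseudometric $d$ is bounded by $1$, so that $d^{\ast}$ takes values in $[0,1]$ on pairs from $X$ (or from $X^{-1}$) and the value $2$ precisely on \emph{mixed} pairs, one letter from $X$ and one from $X^{-1}$.

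Next I would classify each term $-u_{j}+v_{j}$ by the signs of $u_{j},v_{j}$, into four types: (1) both $u_{j},v_{j}\in X$, giving $-x_{a}+x_{b}$ at cost $d^{\ast}(x_{a},x_{b})=d(x_{a},x_{b})\le 1$; (2) both in $X^{-1}$, in which case the term equals $-x_{b}+x_{a}$ at cost $d^{\ast}(u_{j},v_{j})=d(x_{b},x_{a})\le 1$; (3) $u_{j}\in X,\ v_{j}\in X^{-1}$, giving $-x_{a}-x_{b}$ (two negative letters) at cost $2$; and (4) $u_{j}\in X^{-1},\ v_{j}\in X$, giving $x_{a}+x_{b}$ (two positive letters) at cost $2$. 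Types (1) and (2) each contribute one positive and one negative letter. Writing $n_{1},n_{2},n_{3},n_{4}$ for the numbers of terms of each type, counting positive letters gives $n_{1}+n_{2}+2n_{4}=P=k$ and counting negative letters gives $n_{1}+n_{2}+2n_{3}=Q=k$; subtracting yields $n_{3}=n_{4}$, so the two mixed types occur equally often.

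The decisive point, which I expect to be the main obstacle, is showing that the optimal representation has no mixed terms, i.e. $n_{3}=n_{4}=0$. Suppose $n_{3}=n_{4}\ge 1$. Then the representation contains a term $-x_{a}-x_{b}$ (type 3) and a term $x_{p}+x_{q}$ (type 4), together contributing the letters $-x_{a},-x_{b},+x_{p},+x_{q}$ at total cost $2+2=4$. Regrouping these same four letters as $(-x_{a}+x_{p})+(-x_{b}+x_{q})$ leaves the total sum unchanged, the group being Abelian, while lowering their combined cost to $d(x_{a},x_{p})+d(x_{b},x_{q})\le 2<4$ (here the bound $d\le 1$ is essential). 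This exhibits a representation of $g$ of cost strictly below $\hat{d}_{A}(e,g)$, contradicting that $\hat{d}_{A}(e,g)$ is the infimum of such costs. Hence $n_{3}=n_{4}=0$, every term is of type (1) or (2), and rewriting each type-(2) term $x_{a}-x_{b}$ as $-x_{b}+x_{a}$ produces $g=(-z_{1}+t_{1})+\cdots+(-z_{k}+t_{k})$, still built from exactly the letters of the normal form and hence reduced, with all $z_{j},t_{j}\in\{x_{1},\ldots,x_{n}\}$, $2k=\sum_{i=1}^{n}|m_{i}|$, and $\hat{d}_{A}(e,g)=\sum_{j=1}^{k}d(z_{j},t_{j})$, as desired.
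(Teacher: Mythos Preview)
Your proof is correct and follows essentially the same route as the paper's: apply Lemma~\ref{l4}, classify each term $-u_{j}+v_{j}$ by the signs of $u_{j},v_{j}$, note that the balance condition $\sum m_{i}=0$ forces the two mixed types to occur equally often, and regroup a pair of opposite mixed terms into same-sign pairs at lower $d^{\ast}$-cost. The only difference is cosmetic: the paper records the cost comparison as a non-strict inequality and iterates the replacement to obtain a mixed-free representation $\varphi''$ with $\Gamma(\varphi'')\le\Gamma(\varphi)=\hat{d}_{A}(e,g)$, whereas you exploit $d\le 1$ to see the drop is strict (from $4$ to at most $2$) and reach an immediate contradiction with the minimality of $\Gamma(\varphi)$.
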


\begin{proof}
It follows from $\sum_{i=1}^{n}m_{i}=0$ that the number $m=\sum_{i=1}^{n}|m_{i}|$ has to be even. Let $m=2k$ for some $k\in \mathbb{N}$. By Lemma~\ref{l4}, the element $g$ has a reduced representation $\varphi$ of $g$ of the form $$g=(-u_{1}+v_{1})+\cdots+(-u_{k}+v_{k})$$ such that $$\hat{d}_{A}(e, g)=\Gamma (\varphi)=\sum_{j=1}^{k}d^{\ast}(u_{j}, v_{j}),$$where $u_{j}, v_{j}\in\{\pm x_{1}, \cdots, \pm x_{n}\}$ for each $j\leq k$. Obviously, each $-u_{j}+v_{j}$ has one of the following four forms: $a-b, -a+b, a+b, -a-b$ for some $a, b\in X$.

Suppose that $-u_{1}+v_{1}$ has the third form. Then we have $-u_{1}=a\in X$ and $v_{1}\in X$, and hence $-u_{1}+v_{1}=a+v_{1}$. Since $\sum_{i=1}^{n}m_{i}=0$, there exists a $2\leq j\leq k$ such that $-u_{j}+v_{j}$ has the fourth form. Without loss of generality, we may assume that $j=2$. Then $u_{2}\in X$ and $v_{2}=-b$ for some $b\in X$. Therefore, from our definition of the quasi-pseudometric $d^{\ast}$ on $X\cup\{e\}\cup (-X)$, it follows that the sum $\Gamma (\varphi)$ contains the part corresponding to $-u_{1}+v_{1}$ and $(-u_{2}+v_{2})$
\begin{eqnarray}
d^{\ast}(u_{1}, v_{1})+d^{\ast}(u_{2}, v_{2})&=&d^{\ast}(-a, v_{1})+d^{\ast}(u_{2}, -b) \nonumber\\
&=&d(e, a)+d(e, v_{1})+d(u_{2}, e)+d(b, e) \nonumber\\
&\geq&d^{\ast}(b, a)+d^{\ast}(u_{2}, v_{1})\nonumber
\end{eqnarray}
Replace the sum $(-u_{1}+v_{1})+(-u_{2}+v_{2})$ in the reduced representation $\varphi$ by $(-u_{2}+v_{1})+(-b+a)$. Therefore, we get another reduced representation $\varphi^{\prime}$ of $g$ of the form $$g=(-u_{2}+v_{1})+(-b+a)+(-u_{3}+v_{3})+\cdots+(-u_{k}+v_{k}).$$
It follows from the above inequality that $\Gamma (\varphi^{\prime})\leq\Gamma (\varphi)$. Since $k$ is finite, by induction, we can give another reduced representation $\varphi^{\prime\prime}$ of $g$ of the form $$g=(-z_{1}+t_{1})+\cdots(-z_{k}+t_{k}),$$where $z_{j}, t_{j}\in\{x_{1}, \cdots, x_{n}\}$ for each $j\leq k$. Moreover, it is easy to see that we have $\Gamma (\varphi^{\prime\prime})\leq\Gamma (\varphi)$. However, the definition of $\hat{d}_{A}(e, g)=\Gamma (\varphi)\leq \Gamma (\varphi^{\prime\prime})$ whence it follows that $\hat{d}_{A}(e, g)=\sum_{j=1}^{k}d(z_{j}, t_{j})$.
\end{proof}

Suppose that $\mathscr{U}_{X}$ is the finest quasi-uniformity of a space $X$. Put $^{\omega}\mathscr{U}_{X}=\{P: P\ \mbox{is a sequence of}\ \mathscr{U}_{X}\}$. For each $P\in\ ^{\omega}\mathscr{U}_{X}$, denote by $P=\{U_{1}, U_{2},\cdots\}$ or $P=\{U_{n}: n\in\omega\}$.

For each $P=\{U_{1}, U_{2},\cdots\}\in\ ^{\omega}\mathscr{U}_{X}$, let $$W(P)=\{-x_{1}+y_{1}-\cdots -x_{k}+y_{k}: (x_{i}, y_{i})\in U_{i}\ \mbox{for}\ i=1, 2, \cdots, k, k\in\mathbb{N}\},\ \mbox{and},$$ $\mathscr{W}=\{W(P): P\in\ ^{\omega}\mathscr{U}_{X}\}.$

Moreover, fixed any $n\in\mathbb{N}$. Let

$\mathscr{Q}_{n}(P)=\{Q\subset P: |Q|=n\};$

$W_{n}(P)=\{-x_{1}+y_{1}-\cdots -x_{n}+y_{n}: (x_{j}, y_{j})\in U_{i_{j}}\ \mbox{for}\ i=1, 2, \cdots, n,$ $\{U_{i_{1}}, U_{i_{2}}, \cdots, U_{i_{n}}\}\in \mathscr{Q}_{n}(P)\},$ and

$\mathscr{W}_{n}=\{W_{n}(P): P\in\ ^{\omega}\mathscr{U}_{X}\}$.

{\bf Remark} In the above definition, for $P\in\ ^{\omega}\mathscr{U}_{X}$, there may be the same elements in $P$. In particular, for every $U\in\mathscr{U}_{X}$, we have $\{U, U, \cdots\}$ is also in $^{\omega}\mathscr{U}_{X}$. Moreover, the reader should note that the representation of elements of $W(P)$ and $W_{n}(P)$ need not be a reduced representation.

Put $\mathscr{R}_{n}(P)=\{Q\subset P: |Q|\leq n\}$. It is easy to see that

$W_{n}(P)=\{-x_{1}+y_{1}-\cdots -x_{k}+y_{k}: (x_{j}, y_{j})\in U_{i_{j}}\ \mbox{for}\ i=1, 2, \cdots, k$, $\{U_{i_{1}}, U_{i_{2}}, \cdots, U_{i_{k}}\}\in \mathscr{R}_{n}(P)\}.$

The following theorem follows from an adaptation of the characterization of the
neighborhoods of the identity for free topological groups obtained by V.G. Pestov in \cite{PE}.

\begin{theorem}\label{t3}
The family $\mathscr{W}$ is a neighborhood base of $e$ in $AP(X)$.
\end{theorem}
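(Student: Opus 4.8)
The plan is to verify the two defining properties of a neighborhood base at $e$: that every $W(P)\in\mathscr{W}$ is a neighborhood of $e$, and that every neighborhood of $e$ contains some member of $\mathscr{W}$. By Theorem~\ref{t0} the second property reduces to showing that for each $\rho\in\mathscr{P}_{X}$ there is a $P\in{}^{\omega}\mathscr{U}_{X}$ with $W(P)\subset V_{\rho}$. This second property is the easy direction: given $\rho\in\mathscr{P}_{X}$, set $U_{n}=\{(x,y):\rho(x,y)<2^{-n-1}\}$, which lies in $\mathscr{U}_{X}$ because $\rho$ is quasi-uniform with respect to the universal quasi-uniformity, so that $P=\{U_{n}:n\in\mathbb{N}\}\in{}^{\omega}\mathscr{U}_{X}$. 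If $g=-x_{1}+y_{1}-\cdots-x_{k}+y_{k}\in W(P)$ with $(x_{i},y_{i})\in U_{i}$, then feeding this very representation into the infimum defining $N_{\rho}$ gives $\hat{\rho}_{A}(e,g)\le\sum_{i=1}^{k}\rho^{\ast}(x_{i},y_{i})=\sum_{i=1}^{k}\rho(x_{i},y_{i})<\sum_{i=1}^{k}2^{-i-1}<1$, whence $g\in V_{\rho}$ and $W(P)\subset V_{\rho}$.

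The first property is the substantial direction, and here I would adapt Pestov's argument. Fix $P=\{U_{n}\}$. I would recursively construct a decreasing sequence $\{V_{n}\}_{n\ge0}$ in $\mathscr{U}_{X}$ with $V_{0}=X\times X$, with $V_{n}\circ V_{n}\circ V_{n}\subset V_{n-1}$, and, crucially, with $V_{n}\subset\bigcap_{j\le c_{n}}U_{j}$ for a rapidly increasing sequence $(c_{n})$ of positive integers to be pinned down below; the last clause is a finite intersection of members of $\mathscr{U}_{X}$ and so costs nothing. Applying Lemma~\ref{l2} to $\{V_{n}\}$ produces a quasi-pseudometric $\rho$, bounded by $1$, with $V_{n}\subset\{(x,y):\rho(x,y)\le2^{-n}\}\subset V_{n-1}$ for every $n$; this sandwiching shows $\rho\in\mathscr{P}_{X}$, so $V_{\rho}$ is a basic neighborhood of $e$. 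It then suffices to prove $V_{\rho}\subset W(P)$.

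To establish this inclusion, take $g\in V_{\rho}$, that is $\hat{\rho}_{A}(e,g)<1$. By the addendum of Lemma~\ref{l4}, $g=(-y_{1}+z_{1})+\cdots+(-y_{k}+z_{k})$ with all $y_{i},z_{i}\in X$ and $\sum_{i=1}^{k}\rho(y_{i},z_{i})<1$. For each $i$ with $\rho(y_{i},z_{i})>0$ choose the integer $m_{i}$ with $2^{-m_{i}-1}<\rho(y_{i},z_{i})\le2^{-m_{i}}$ (pairs with $\rho(y_{i},z_{i})=0$ lie in every $V_{n}$ and may be treated as unconstrained). Then $(y_{i},z_{i})\in\{\rho\le2^{-m_{i}}\}\subset V_{m_{i}-1}\subset\bigcap_{j\le c_{m_{i}-1}}U_{j}$, so the pair $(y_{i},z_{i})$ is admissible in slot $j$ of $W(P)$ for every $j\le c_{m_{i}-1}$. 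From $\sum_{i}2^{-m_{i}-1}<\sum_{i}\rho(y_{i},z_{i})<1$ we obtain the counting bound $\#\{i:m_{i}\le M\}<2^{M+1}$ for each $M$. Choosing $c_{n}=2^{n+2}$ (so $c_{M-1}=2^{M+1}$), each pair $i$ may be placed in an initial block $\{1,\dots,c_{m_{i}-1}\}$ of slots whose length dominates the number of competing pairs, so the prefix form of Hall's condition holds and a greedy assignment yields an injection $s$ sending pair $i$ to a distinct slot $s(i)$ with $(y_{i},z_{i})\in U_{s(i)}$. Filling the finitely many unused slots $j$ with diagonal pairs $(x,x)\in U_{j}$, which contribute $-x+x=e$, exhibits $g$ as an element of $W(P)$; hence $V_{\rho}\subset W(P)$, completing the argument.

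The main obstacle is exactly this last combinatorial reassignment. The members of $P$ are \emph{not} assumed nested, so each pair, although it naturally sits in the fine entourage matching its $\rho$-size, must instead be routed into the particular slot $U_{j}$ prescribed by the definition of $W(P)$. Two devices make this possible: building the nesting $V_{n}\subset\bigcap_{j\le c_{n}}U_{j}$ into the construction, so that a pair of $\rho$-size about $2^{-m}$ automatically lands in all the early entourages $U_{1},\dots,U_{c_{m-1}}$; and the estimate $\#\{i:m_{i}\le M\}<2^{M+1}$, which guarantees that the number of pairs demanding a small-index slot never exceeds the supply. Matching the growth rate of $(c_{n})$ to this counting bound is the delicate point, and it is where the $P$-space--free, purely quasi-uniform adaptation of Pestov's reasoning has to be carried out with care.
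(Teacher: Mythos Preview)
Your approach diverges substantially from the paper's, and in an instructive way. The paper never attempts to show directly that each $W(P)$ is a neighborhood of $e$ in $AP(X)$. Instead it verifies that $\mathscr{W}$ satisfies the standard axioms (i)--(iv) for a base at the identity of some paratopological group topology $\mathscr{F}_{1}$ on $A_{a}(X)$, checks that $\mathscr{F}_{1}|_{X}$ is no finer than the original topology of $X$, and then proves (the ``Claim'') that every $AP(X)$-neighborhood $V$ of $e$ contains some $W(P)$---done essentially as in your ``easy'' direction, by choosing $V_{n}$ with $V_{n}+V_{n}+V_{n}\subset V_{n-1}$, setting $U_{n}=\{(x,y):-x+y\in V_{n}\}$, and invoking Lemma~\ref{l5}. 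The Claim says $\mathscr{F}_{1}$ is finer than the $AP(X)$-topology, so $\mathscr{F}_{1}|_{X}$ coincides with the original topology of $X$; now the maximality of the free paratopological group topology (Lemma~\ref{l11}) forces $\mathscr{F}_{1}$ to be coarser than, hence equal to, the $AP(X)$-topology. Thus each $W(P)$ is open with no combinatorics at all. Your route, by contrast, manufactures for each $P$ a concrete $\rho\in\mathscr{P}_{X}$ with $V_{\rho}\subset W(P)$, trading the appeal to the universal property for a Hall-type matching argument; you gain an explicit description, and you lose brevity.

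One technical point in your construction needs repair. You simultaneously require $V_{0}=X\times X$ (to feed Lemma~\ref{l2}) and $V_{n}\subset\bigcap_{j\le c_{n}}U_{j}$ with $c_{n}=2^{n+2}$; at $n=0$ these are incompatible unless $U_{1},\dots,U_{4}$ all equal $X\times X$. Even if one imposes the second clause only for $n\ge 1$, the pairs $(y_{i},z_{i})$ with $m_{i}\in\{0,1\}$ (that is, $\rho(y_{i},z_{i})>1/4$; there can be up to three of these since $\sum\rho(y_{i},z_{i})<1$) land only in $V_{0}=X\times X$ and need not belong to any $U_{j}$, so your placement argument does not cover them. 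The cure is easy---work instead with $\min(1,4\rho)\in\mathscr{P}_{X}$, which forces every $\rho(y_{i},z_{i})<1/4$, hence $m_{i}\ge 2$, and then the matching goes through---but as written this is a small gap.
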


\begin{proof}
It is easy to prove that the family $\mathscr{W}$ satisfies the following conditions (i)-(iv):\\
(i) for each $W\in\mathscr{W}$, there exists a $V\in\mathscr{W}$ such that $V+V\subset W$;\\
(ii) for each $W\in\mathscr{W}$ and each $g\in V$, there exists a $V\in\mathscr{W}$ such that $g+V\subset W$;\\
(iii) for every $U, V\in\mathscr{W}$, there exists a $W\in\mathscr{W}$ such that $W\subset U\cap V$;\\
(iv) $\{0\}=\cap\mathscr{W}$.

Therefore, the topology $\mathscr{F}_{1}$ generated by $\mathscr{W}$ on $A_{a}(X)$ is a paratopological group topology. Pick $P=\{U_{1}, U_{2},\cdots\}\in\ ^{\omega}\mathscr{U}_{X}$ and $x\in X$, and put $W(x)=\{y\in X: (x, y)\in U_{1}\}.$ Then $W(x)$ is open in $X$ since $\mathscr{U}_{X}$ is compactible with the orignal topology for $X$. Furthermore, we can prove that $x\in W(x)\subset (x+W(P))\cap X$, which implies that $\mathscr{F}_{1}|_{X}$ is weaker than the original topology for $X$.

Claim: The topology $\mathscr{F}_{1}$ is stronger than the topology of $AP(X)$.

Indeed, let $V$ be an open neighborhood of $e$ in $AP(X)$. Put $V_{0}=V$ and pick a sequence $\{V_{n}: n\in\omega\}$ of neighborhoods of $e$ in $AP(X)$ such that $V_{n}+V_{n}+V_{n}\subset V_{n-1}$. For each $n\in \mathbb{N}$, put $$U_{n}=\{(x, y)\in X\times X: -x+y\in V_{n}\},$$ then $U_{n}\in\mathscr{U}_{X}$. Hence $P=\{U_{1}, U_{2}, \cdots\}\in\ ^{\omega}\mathscr{U}_{X}.$ For each point $g\in W(P)$, then there exists an $n\in\mathbb{N}$ such that $g=-x_{1}+y_{1}-\cdots -x_{n}+y_{n}$ for some $(x_{i}, y_{i})\in U_{i}$ for $i=1, 2, \cdots, n.$ Therefore, it follows from Lemma~\ref{l5} that $g\in V_{1}+V_{2}+\cdots +V_{n}\subset V_{0}=V$. Then we have $W(P)\subset V.$

It follows from Claim that $\mathscr{F}_{1}|_{X}$ coincides with the original topology for $X$. Therefore, $\mathscr{F}_{1}$ is weaker than the topology for $AP(X)$. Hence $\mathscr{F}_{1}$ coincides with the topology for $AP(X)$. Thus the family $\mathscr{W}$ is a neighborhood base of $e$ in $AP(X)$.
\end{proof}

Using Lemma~\ref{l5} and Theorem~\ref{t3}, we can show the following theorem by a similar proof of \cite[Theorem 2.4]{YK}.

\begin{theorem}
For each $n\in \mathbb{N}$, the family $\mathscr{W}_{n}$ is a neighborhood base of $e$ in $A_{2n}(X)$.
\end{theorem}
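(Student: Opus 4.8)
The plan is to identify $\mathscr{W}_{n}$ with the trace on $A_{2n}(X)$ of the neighbourhood base $\mathscr{W}$ at $e$ in $AP(X)$ given by Theorem~\ref{t3}. Since each element of $W_{n}(P)$ is a sum of at most $n$ terms $-x_{j}+y_{j}$, it is a word of reduced length at most $2n$, so $W_{n}(P)\subseteq A_{2n}(X)$ for every $P\in\ ^{\omega}\mathscr{U}_{X}$. By Theorem~\ref{t3} the sets $W(P)\cap A_{2n}(X)$ form a neighbourhood base at $e$ in the subspace $A_{2n}(X)$, so it suffices to establish two comparisons: (i) for each $P$ there is $P'$ with $W(P')\cap A_{2n}(X)\subseteq W_{n}(P)$, which makes every $W_{n}(P)$ a neighbourhood of $e$ in $A_{2n}(X)$; and (ii) for each $P$ there is $P'$ with $W_{n}(P')\subseteq W(P)$, which makes $\mathscr{W}_{n}$ cofinal among the basic neighbourhoods.

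Comparison (ii) is the routine one. Given $P=\{U_{1},U_{2},\dots\}$, I would put $U_{j}'=U_{1}\cap\cdots\cap U_{j}$; each such set again belongs to the filter $\mathscr{U}_{X}$, and $P'=\{U_{1}',U_{2}',\dots\}$ is decreasing. If $g\in W_{n}(P')$, write $g=-x_{1}+y_{1}-\cdots-x_{k}+y_{k}$ with $k\leq n$ and $(x_{j},y_{j})\in U_{i_{j}}'$ for distinct indices which, using commutativity of $A_{a}(X)$, may be listed as $i_{1}<\cdots<i_{k}$. Then $i_{j}\geq j$, so monotonicity gives $U_{i_{j}}'\subseteq U_{j}'\subseteq U_{j}$ and hence $(x_{j},y_{j})\in U_{j}$; thus $g\in W(P)$.

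Comparison (i) is the crux. Put $\widetilde{U}=U_{1}\cap\cdots\cap U_{n}\in\mathscr{U}_{X}$ and choose, by the quasi-uniformity axioms, a sequence $\{V_{s}:s\in\omega\}$ with $V_{0}=X\times X$, $V_{1}\subseteq\widetilde{U}$ and $V_{s+1}\circ V_{s+1}\circ V_{s+1}\subseteq V_{s}$ for all $s$; then set $P'=\{V_{3},V_{4},V_{5},\dots\}$, that is $U_{i}'=V_{i+2}$. Now take $g\in W(P')\cap A_{2n}(X)$ and write $g=-a_{1}+b_{1}-\cdots-a_{m}+b_{m}$ with $(a_{i},b_{i})\in V_{i+2}$. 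Since $\ell(g)\leq 2n$, I would fix a cancellation pairing of the $2m$ letters that reduces this word to its normal form: it pairs certain positive occurrences $b_{i}$ with equal negative occurrences $a_{i'}$ (so $b_{i}=a_{i'}$) and leaves exactly $\ell(g)\leq 2n$ occurrences unpaired. Viewing the differences $1,\dots,m$ as vertices and each matched pair $b_{i}=a_{i'}$ as an edge $i\to i'$, the in- and out-degrees are at most one, so the graph is a disjoint union of paths and cycles; each cycle sums to $e$, while a path $i_{1}\to\cdots\to i_{r}$ telescopes to the single difference $-a_{i_{1}}+b_{i_{r}}$ with $(a_{i_{1}},b_{i_{r}})\in V_{i_{1}+2}\circ\cdots\circ V_{i_{r}+2}$. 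Because each path leaves exactly the two unpaired letters $a_{i_{1}},b_{i_{r}}$ and these account for $\ell(g)\leq 2n$, there are at most $n$ paths.

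Finally I would invoke Lemma~\ref{l5}(1): the indices $i_{1}+2,\dots,i_{r}+2$ along one path are distinct, so $\sum_{l}2^{-(i_{l}+2)}\leq\sum_{i=1}^{\infty}2^{-(i+2)}=\tfrac14<\tfrac12=2^{-1}$, whence each path difference lies in $V_{1}\subseteq\widetilde{U}$. Thus $g$ is a sum of at most $n$ differences, each lying in $\widetilde{U}\subseteq U_{1}\cap\cdots\cap U_{n}$; assigning the $j$-th of them to $U_{j}$ realizes $g$ through a subset of $P$ of size at most $n$, i.e.\ a member of $\mathscr{R}_{n}(P)$, so $g\in W_{n}(P)$. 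The main obstacle is precisely this combinatorial step of (i): organising the cancellations into at most $n$ telescoping chains and matching the resulting compositions of entourages to the bound demanded by Lemma~\ref{l5}(1). Once the chain decomposition is in place, the quasi-uniform estimates and the reduction to Theorem~\ref{t3} are straightforward.
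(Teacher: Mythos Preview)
Your argument is correct and follows exactly the route the paper indicates: the paper does not spell out a proof but says the result follows ``by a similar proof of \cite[Theorem~2.4]{YK}'' using Lemma~\ref{l5} and Theorem~\ref{t3}, and that is precisely what you do---trace the base $\mathscr{W}$ from Theorem~\ref{t3} onto $A_{2n}(X)$, handle the cofinality direction by intersecting entourages, and in the key direction organise the cancellations into at most $n$ telescoping chains whose compositions are controlled by Lemma~\ref{l5}(1). Your chain/graph bookkeeping (maximal pairing, in/out-degree $\leq 1$, paths counted by the $\ell(g)\leq 2n$ unpaired letters) is exactly the combinatorics underlying Yamada's argument, so your proposal is essentially the paper's intended proof written out in full.
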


\begin{theorem}\label{t4}
Let $d$ be a quasi-pseudometric on $X$. Then $\hat{d}_{A}(kx, ky)=kd(x, y)$ for all $x, y\in X$ and $k\in\mathbb{N}\cup\{0\}.$
\end{theorem}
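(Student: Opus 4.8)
The plan is to reduce the statement to a computation of the quasi-prenorm $N_{d}$ on a single element of $A_{a}(X)$ and then to invoke Lemma~\ref{l6}. Recall that in the additive notation of the free abelian group $\hat{d}_{A}(g,h)=N_{d}(-g+h)$, so that $\hat{d}_{A}(kx,ky)=N_{d}(-kx+ky)=\hat{d}_{A}(e,-kx+ky)$. First I would dispose of the degenerate cases: if $k=0$ or $x=y$, then $-kx+ky=e$, whence $\hat{d}_{A}(kx,ky)=N_{d}(e)=0=k\,d(x,y)$ (using $d(x,x)=0$ in the latter case). So from now on assume $k\geq 1$ and $x\neq y$.

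Under these assumptions $g:=-kx+ky$ is a nonzero element of $A_{a}(X)$ whose normal form is $(-k)x+k\,y$; in particular $\sum_{i}m_{i}=-k+k=0$ and $\sum_{i}|m_{i}|=2k$. Lemma~\ref{l6} then furnishes a reduced representation $g=(-z_{1}+t_{1})+\cdots+(-z_{k}+t_{k})$ with $z_{j},t_{j}\in\{x,y\}$ for each $j\leq k$ and $\hat{d}_{A}(e,g)=\sum_{j=1}^{k}d(z_{j},t_{j})$. It remains only to identify the letters $z_{j},t_{j}$.

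The key step is a coefficient count. Write $p=|\{j:z_{j}=x\}|$ and $q=|\{j:t_{j}=x\}|$. Comparing the coefficient of $x$ on the two sides of the representation gives $q-p=-k$, while trivially $0\leq p,q\leq k$; hence $p=k$ and $q=0$. Thus every $z_{j}$ equals $x$, and since each $t_{j}\in\{x,y\}$ with none equal to $x$, every $t_{j}$ equals $y$. Substituting, $\hat{d}_{A}(kx,ky)=\hat{d}_{A}(e,g)=\sum_{j=1}^{k}d(x,y)=k\,d(x,y)$, which is exactly the claim.

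I expect no serious obstacle here: once Lemma~\ref{l6} is available the result follows almost immediately. The only point requiring genuine care is the bookkeeping that pins the reduced representation down to $k$ identical blocks $-x+y$, namely checking that the single constraint $q-p=-k$ together with the range $0\leq p,q\leq k$ leaves no freedom whatsoever. A secondary routine check is that the concatenation $-x+y+\cdots-x+y$ is indeed reduced (no cancellation occurs precisely because $x\neq y$), so that the value of $k$ produced by Lemma~\ref{l6} matches the exponent in $kx,ky$.
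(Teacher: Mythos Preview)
Your proposal is correct and follows essentially the same route as the paper: handle the degenerate cases, apply Lemma~\ref{l6} to $g=-kx+ky$, and then argue that each block $-z_{j}+t_{j}$ must equal $-x+y$. The only difference is cosmetic---the paper infers this last step from the representation being reduced (so each block is $\pm(-x+y)$, and the sign is forced), whereas you do an explicit coefficient count; both arrive at the same conclusion.
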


\begin{proof}
If $x=y$ or $k=0$ then it is trivially true. Therefore, we can suppose that $x\neq y$ and $k\in \mathbb{N}$. Let $g=-kx+ky$.
It follows from Lemma~\ref{l6} that $g$ has an reduced representation of the form  $$g=(-z_{1}+t_{1})+\cdots(-z_{k}+t_{k}),$$ where $z_{j}, t_{j}\in\{x, y\}$ for each $j\leq k$ and $\hat{d}_{A}(e, g)=\sum_{j=1}^{k}d(z_{j}, t_{j})$. Since the above representation of $g$ is reduced and $k>0$, each $-z_{j}+t_{j}$ is equal to $-x+y$. Therefore, we have $$\hat{d}_{A}(kx, ky)=\hat{d}_{A}(e, g)=\sum_{j=1}^{k}d(z_{j}, t_{j})=\sum_{j=1}^{k}d(x, y)=kd(x, y).$$
\end{proof}

A pair $(P, \leq)$ is call a {\it quasi-ordered set} if $\leq$ is a reflexive transitive
relation on the set $P$. If $(P, \leq)$ has the additional property of antisymmetry,
then it is a {\it partially ordered set}. A set $D\subset P$ is called {\it dominating}
in the quasi-ordered set $(P, \leq)$ if for each $p\in P$ there exists $q\in D$ such that
$p\leq q$. Similarly, a subset $E$ of $P$ is said to be {\it dense} in $(P, \leq)$ if for every
$p\in P$ there exists $q\in E$ with $q\leq p$. The minimal cardinality of a dominating
family in $(P, \leq)$ is denoted by $D(P, \leq)$, while we use $d(P, \leq)$ for the minimal
cardinality of a dense set in $(P, \leq)$. The notions of dominating and dense sets
are dual: if a set $S$ is dense in $(P, \leq)$, then it is dominating in $(P, \geq)$ and
vice versa. Therefore, $d(P, \leq)=D(P, \geq)$ and $d(P, \geq)=D(P, \leq)$. Note that in any
homogeneous space $G$ we have $d(N(e), \subset)=\chi(G)$.

If $(P; \leq)$ and $(Q; \ll)$ are quasi-ordered sets, then a mapping $f: P\rightarrow Q$ is
called {\it order-preserving} if $x\leq y$ implies $f(x)\ll f(y)$, where $x, y\in P$. Similarly,
$f$ is order-reversing if $x\leq y$ implies $f(x)\gg f(y)$.

\begin{lemma}\cite{NP}\label{l7}
Let $(P, \leq)$ and $(Q, \ll)$ be quasi-ordered sets, and let $f: P\rightarrow Q$ be an order-preserving map. If $f(P)$ is a dominating set in $Q$, then $D(Q)\leq D(P).$
\end{lemma}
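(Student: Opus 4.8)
The plan is to push a minimal dominating set of $P$ forward through $f$ and show that its image dominates $Q$. Concretely, I would fix a dominating set $D\subset P$ with $|D|=D(P)$, whose existence is guaranteed by the definition of $D(P)$ as a minimal cardinality, and then prove that the image $f(D)\subset Q$ is dominating in $(Q,\ll)$. Once this is established, the inequality follows immediately by comparing cardinalities.

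To verify that $f(D)$ dominates, I would take an arbitrary $q\in Q$ and chain the two hypotheses together. First, since $f(P)$ is dominating in $Q$, there is some $p\in P$ with $q\ll f(p)$. Next, since $D$ is dominating in $P$, there is some $d\in D$ with $p\leq d$. Because $f$ is order-preserving, $p\leq d$ yields $f(p)\ll f(d)$, and transitivity of $\ll$ then closes the chain $q\ll f(p)\ll f(d)$, so that $q\ll f(d)$ with $f(d)\in f(D)$. As $q$ was arbitrary, $f(D)$ is indeed a dominating subset of $Q$.

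Finally, I would conclude by noting that $f(D)$ is a dominating subset of $Q$ with $|f(D)|\leq|D|$, whence $D(Q)\leq|f(D)|\leq|D|=D(P)$, which is the desired inequality. The argument is essentially a bookkeeping exercise in the three properties at play---domination of $f(P)$, domination of $D$, and order-preservation of $f$---and I expect no genuine obstacle; the only point requiring care is to invoke these hypotheses in the correct order so that transitivity of $\ll$ can close the chain $q\ll f(d)$, since reversing the roles of the two domination hypotheses would not produce a valid inequality.
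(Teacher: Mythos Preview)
Your argument is correct and is exactly the standard proof of this elementary fact about quasi-ordered sets. The paper itself does not prove the lemma; it merely cites it from \cite{NP}, so there is no alternative approach to compare against.
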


Let $X$ be a Tychonoff space, and let $\mathscr{P}_{X}$ be the family of all continuous quasi-pseudometrics from $(X\times X, \mathscr{U}_{X}^{-1}\times \mathscr{U}_{X})$ to $(\mathbb{R}, \mathscr{U}^{\star})$.

\begin{theorem}\label{t5}
Let $X$ be a Tychonoff space. Then $\chi (AP(X))=D(\mathscr{P}_{X}, \leq)$.
\end{theorem}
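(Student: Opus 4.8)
The plan is to compute the character of the homogeneous space $AP(X)$ at its neutral element $e$ and then convert the resulting order-theoretic quantity into a dominating number of quasi-pseudometrics by means of Lemma~\ref{l7}. First I would record, using the observation that $d(N(e),\subseteq)=\chi(G)$ for a homogeneous group $G$, that $\chi(AP(X))=d(N(e),\subseteq)$, where $N(e)$ is the neighbourhood filter of $e$; and by the duality $d(P,\le)=D(P,\ge)$ this equals $D(N(e),\supseteq)$. Thus the theorem reduces to the equality $D(N(e),\supseteq)=D(\mathscr{P}_X,\le)$, which I would establish by exhibiting order-preserving maps with dominating image in each direction and invoking Lemma~\ref{l7}.

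For the inequality $\chi(AP(X))\le D(\mathscr{P}_X,\le)$ I would use the map $\Phi\colon(\mathscr{P}_X,\le)\to(N(e),\supseteq)$ given by $\Phi(\rho)=V_\rho=\{g\in AP(X):\hat\rho_A(e,g)<1\}$. This is well defined into $N(e)$ even for an unbounded $\rho$, because $V_\rho=V_{\rho\wedge 1}$: any representation from Lemma~\ref{l4} realizing $\widehat{(\rho\wedge 1)}_A(e,g)<1$ uses only summands of value $<1$, on which $\rho^\ast$ and $(\rho\wedge 1)^\ast$ agree, so truncation does not change which elements have Graev norm below $1$, and $V_{\rho\wedge 1}$ is open by Theorem~\ref{t0}. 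The map $\Phi$ is order-preserving, since $\rho\le\sigma$ pointwise forces $\rho^\ast\le\sigma^\ast$, hence $\hat\rho_A\le\hat\sigma_A$ and $V_\sigma\subseteq V_\rho$, that is $\Phi(\rho)\supseteq\Phi(\sigma)$. Finally Theorem~\ref{t0} says the bounded members of $\mathscr{P}_X$ already give a base at $e$, so $\Phi(\mathscr{P}_X)$ is dominating in $(N(e),\supseteq)$, and Lemma~\ref{l7} yields $D(N(e),\supseteq)\le D(\mathscr{P}_X,\le)$.

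For the reverse inequality I would build $\Psi\colon(N(e),\supseteq)\to(\mathscr{P}_X,\le)$ by the canonical recipe $\Psi(V)=\sup\{\rho\in\mathscr{P}_X:V\subseteq V_\rho\}$, taken pointwise. A pointwise supremum of quasi-pseudometrics is again a quasi-pseudometric, and $\Psi$ is order-preserving into $(\mathscr{P}_X,\le)$: replacing $V$ by a smaller neighbourhood only enlarges the indexing family and hence raises the supremum. Its image is dominating, for given any $\sigma\in\mathscr{P}_X$ the choice $V=V_\sigma$ gives $\sigma\le\Psi(V_\sigma)$, since $\sigma$ itself lies in the family defining $\Psi(V_\sigma)$. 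Granting that $\Psi(V)\in\mathscr{P}_X$, Lemma~\ref{l7} then gives $D(\mathscr{P}_X,\le)\le D(N(e),\supseteq)=\chi(AP(X))$, and the two inequalities combine to the asserted equality.

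The main obstacle is exactly the verification that $\Psi(V)$ is again a continuous quasi-pseudometric from $(X\times X,\mathscr{U}_X^{-1}\times\mathscr{U}_X)$ to $(\mathbb{R},\mathscr{U}^{\star})$: a pointwise supremum of $\mathscr{U}^{\star}$-continuous functions need not inherit that continuity. I expect this to be repaired using the maximality of the universal quasi-uniformity $\mathscr{U}_X$. Starting from $V$ one constructs a descending chain $V=V_0\supseteq V_1\supseteq\cdots$ with $V_{n+1}+V_{n+1}+V_{n+1}\subseteq V_n$, sets $U_n=\{(x,y)\in X\times X:-x+y\in V_n\}\in\mathscr{U}_X$, and applies Lemma~\ref{l2} to obtain a quasi-pseudometric subordinate to the chain whose balls witness $\mathscr{U}^{\star}$-continuity via Lemma~\ref{l3}; one then shows this concrete element of $\mathscr{P}_X$ is cofinal with $\Psi(V)$ in $(\mathscr{P}_X,\le)$, so that it may replace $\Psi(V)$ without disturbing either the order-preservation of $\Psi$ or the domination of its image. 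Controlling the interaction between the group-level balls $V_\rho$ and the pointwise order on $X\times X$, and checking that this replacement is uniform enough to remain monotone, is where the real work lies.
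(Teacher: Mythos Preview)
Your forward inequality $\chi(AP(X))\le D(\mathscr{P}_X,\le)$ via the order-preserving map $\rho\mapsto V_\rho$ and Lemma~\ref{l7} is exactly the paper's argument.

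The reverse inequality, however, has a genuine gap. Your map $\Psi(V)=\sup\{\rho\in\mathscr{P}_X:V\subseteq V_\rho\}$ need not lie in $\mathscr{P}_X$, as you note, and your proposed repair does not close the hole. From the chain $V=V_0\supseteq V_1\supseteq\cdots$ and the entourages $U_n=\{(x,y):-x+y\in V_n\}$, Lemma~\ref{l2} produces a quasi-pseudometric $d_V$ with $\{(x,y):d_V(x,y)\le 2^{-i}\}\subset U_{i-1}$. But the only information this yields about any $\rho$ with $V\subseteq V_\rho$ is that $d_V(x,y)\le 2^{-i}$ implies $-x+y\in V_{i-1}\subseteq V\subseteq V_\rho$, hence merely $\rho(x,y)<1$. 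No smallness of $\rho(x,y)$ follows from smallness of $d_V(x,y)$, so $d_V$ does not dominate $\Psi(V)$, and the image of the replacement map is not dominating. Monotonicity is equally problematic, since the chain, and hence $d_V$, depends on non-canonical choices.

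The idea you are missing is the one carried by Theorem~\ref{t4}: the Graev extension scales, $\hat d_A(kx,ky)=k\,d(x,y)$. The paper uses this to prove directly that $V_d\subseteq V_\rho$ implies $\rho\le 2d$: if $d(x,y)<2^{-n}$ then $\hat d_A(e,2^n(-x+y))=2^n d(x,y)<1$, so $2^n(-x+y)\in V_d\subseteq V_\rho$, whence $2^n\rho(x,y)=\hat\rho_A(e,2^n(-x+y))<1$; a dyadic argument then gives $\rho\le 2d$. Once you have this implication there is no need for a monotone $\Psi$ at all: starting from an arbitrary base $\mathscr{B}$ at $e$, refine each $B\in\mathscr{B}$ to some $V_{d_B}$ via Theorem~\ref{t0}, and then $\{\min\{2d_B,1\}:B\in\mathscr{B}\}$ is a dominating family in $(\mathscr{P}_X,\le)$ of cardinality at most $|\mathscr{B}|$. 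This gives $D(\mathscr{P}_X,\le)\le\chi(AP(X))$ immediately.
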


\begin{proof}
It follows from Theorem~\ref{t3} that there exists a natural correspondence between the family $\mathscr{P}_{X}$ and a base at the neutral element $e$ of $AP(X)$. Indeed, the map $d\mapsto V_{d}$ from $(\mathscr{P}_{X}, \leq)$ to the partially ordered set $(\mathscr{N}(e), \supseteq)$ of open neighborhoods of $e$ in $AP(X)$ is order-preserving and maps $(\mathscr{P}_{X}, \leq)$ to a base at $e$ in AP(X), that is, a dominating set in $(\mathscr{N}(e), \supseteq)$, which implies that $\chi (AP(X))\leq D(\mathscr{P}_{X}, \leq)$ by Lemma~\ref{l7}.

Assume that a subset $Q\subset \mathscr{P}_{X}$ is such that $\{V_{d}: d\in Q\}$ is a base at the neutral element in $AP(X)$.

Claim: For every $\rho\in\mathscr{P}_{X}$, there exists $d\in Q$ such that $\rho\leq 2d$.

Indeed, fixed a $\rho\in\mathscr{P}_{X}$, since $\{V_{d}: d\in Q\}$ is a base at the neutral element in $AP(X)$, there exists $d\in Q$ such that $V_{d}\subset V_{\rho}$. Now, we shall show that $\rho\leq 2d$. Let $x, y\in X$. If $d(x, y)<1$ then we have $-x+y\in V_{d}\subset V_{\rho}$, and hence $\rho(x, y)<1$. Given an $n\in \mathbb{N}$. Similarly, if $n\in\mathbb{N}$ and $d(x, y)< 2^{-n}$ then it follows from Theorem~\ref{t4} that $\hat{d}_{A}(e, 2^{n}(-x+y))=\hat{d}_{A}(2^{n}x, 2^{n}y)=2^{n}d(x, y)<1$. Therefore, we have $2^{n}(-x+y)\in V_{d}\subset V_{\rho}$, and hence $\hat{\rho}_{A}(2^{n}x, 2^{n}y)=2^{n}\rho(x, y)<1$, that is, $\rho(x, y)<2^{-n}$. Thus we have showed that $d(x, y)<2^{-n}$ implies $\rho(x, y)<2^{-n}$ for $n\in\mathbb{N}\cup\{0\}$. Hence, it is easy to see that $\rho(x, y)\leq 2d(x, y)$ if $d(x, y)=0$ or $d(x, y)=1$. If $0<d(x, y)<1$, then we can choose an $n\in \mathbb{N}$ such that $2^{-n-1}\leq d(x, y)<2^{-n}$. Therefore, we have $\rho(x, y)<2^{-n}$, which implies that $\rho(x, y)\leq 2d(x, y)$. Hence we have $\rho\leq 2d$.

For each $d\in Q$, let $d^{\ast}=\min\{2d, 1\}$, and put $Q^{\ast}=\{d^{\ast}: d\in Q\}$. Obviously, we have $Q^{\ast}\subset \mathscr{P}_{X}$. It follows from our claim that $Q^{\ast}$ is a dominating family in $\mathscr{P}_{X}$. Thus $D(\mathscr{P}_{X}, \leq)\leq |Q^{\ast}|\leq |Q|$.

Suppose that $\mathscr{B}$ is a base at $e$ in $AP(X)$. For each $B\in\mathscr{B}$, it follows from Theorem~\ref{t0} that there exists $d_{B}\in\mathscr{P}_{X}$ such that $V_{d_{B}}\subset B.$ Put $\mathscr{A}=\{d_{B}: B\in\mathscr{B}\}$. Therefore, the set $\{V_{d_{B}}: B\in\mathscr{B}\}$ is a base at $e$ and satisfies $|\mathscr{A}|\leq |\mathscr{B}|$, and hence $D(\mathscr{P}_{X}, \leq)\leq |\mathscr{A}|\leq |\mathscr{B}|$. Thus $D(\mathscr{P}_{X}, \leq)\leq\chi(AP(X))$.

Therefore, we have $D(\mathscr{P}_{X}, \leq)=\chi(AP(X))$.
\end{proof}

From the proof of Theorem~\ref{t5}, it is easy to show the following theorem.

\begin{theorem}
Let $X$ be a Tychonoff space and $\mathscr{Q}\subset\mathscr{P}_{X}$. Then the collection of open sets$$\{g\in AP(X): \hat{d}_{A}(e, g)<\varepsilon\},\ \mbox{for}\ d\in\mathscr{Q}\ \mbox{and}\ \varepsilon>0,$$is a base at $e$ for the topology of the free Abelian paratopological group $AP(X)$ if and only if for each $\rho\in\mathscr{P}_{X}$ there is $d\in\mathscr{Q}$ such that $\rho\leq 2d.$
\end{theorem}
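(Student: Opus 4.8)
The plan is to follow the two-part structure of the proof of Theorem~\ref{t5}, exploiting the same dictionary between pointwise comparison of quasi-pseudometrics and inclusion of the associated neighborhoods of $e$. The two tools that make this dictionary work are Theorem~\ref{t0}, which guarantees that the sets $V_{\rho}=\{g\in AP(X):\hat{\rho}_{A}(e,g)<1\}$ with $\rho\in\mathscr{P}_{X}$ already form a base at $e$, and Theorem~\ref{t4}, which gives the exact scaling identity $\hat{d}_{A}(kx,ky)=k\,d(x,y)$; the latter is what converts an inclusion of basic sets into a pointwise inequality and back. Throughout I would write $V_{d}(\varepsilon)=\{g\in AP(X):\hat{d}_{A}(e,g)<\varepsilon\}$, so that the family in question is $\{V_{d}(\varepsilon):d\in\mathscr{Q},\ \varepsilon>0\}$, each member of which is a neighborhood of $e$ by the reasoning behind Theorem~\ref{t0}.

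For sufficiency, assume that for every $\rho\in\mathscr{P}_{X}$ there is $d\in\mathscr{Q}$ with $\rho\leq 2d$. Fixing such $\rho$ and $d$, I would show $V_{d}(1/2)\subseteq V_{\rho}$. Take $g$ with $\hat{d}_{A}(e,g)<1/2$; by the refined part of Lemma~\ref{l4}, since the value is below $1$, $g$ admits a representation $g=(-y_{1}+z_{1})+\cdots+(-y_{k}+z_{k})$ with all $y_{i},z_{i}\in X$ and $\hat{d}_{A}(e,g)=\sum_{i=1}^{k}d(y_{i},z_{i})$. Feeding this same representation into the infimum defining $\hat{\rho}_{A}$ and using $\rho\leq 2d$ on $X\times X$ gives $\hat{\rho}_{A}(e,g)\leq\sum_{i=1}^{k}\rho(y_{i},z_{i})\leq 2\sum_{i=1}^{k}d(y_{i},z_{i})<1$, so $g\in V_{\rho}$. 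Since the sets $V_{\rho}$ form a base at $e$ by Theorem~\ref{t0}, the family $\{V_{d}(\varepsilon)\}$ refines a base and is therefore itself a base at $e$.

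For necessity, assume the family is a base and fix $\rho\in\mathscr{P}_{X}$. Then $V_{\rho}$ is a neighborhood of $e$ by Theorem~\ref{t0}, so there are $d\in\mathscr{Q}$ and $\varepsilon>0$ with $V_{d}(\varepsilon)\subseteq V_{\rho}$. I would then run the dyadic argument from the Claim in Theorem~\ref{t5}: for $x,y\in X$ and any integer $n\geq 0$, if $2^{n}d(x,y)<\varepsilon$ then $\hat{d}_{A}(e,2^{n}(-x+y))=2^{n}d(x,y)<\varepsilon$ by Theorem~\ref{t4}, so $2^{n}(-x+y)\in V_{d}(\varepsilon)\subseteq V_{\rho}$, whence $2^{n}\rho(x,y)=\hat{\rho}_{A}(e,2^{n}(-x+y))<1$, i.e.\ $\rho(x,y)<2^{-n}$. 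Letting $n$ range and comparing dyadic scales yields a pointwise bound of the form $\rho\leq(2/\varepsilon)\,d$.

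The main obstacle is precisely this reconciliation of constants: the dyadic estimate naturally produces $\rho\leq(2/\varepsilon)d$ rather than $\rho\leq 2d$, so matching the stated factor requires handling the scaling parameter $\varepsilon$ with care---either by using the freedom that the family contains $V_{d}(\varepsilon)$ for \emph{every} $\varepsilon>0$ to normalize the threshold, or by passing to the appropriate scalar multiple and invoking the truncation $\min\{2d,1\}$ to stay inside $\mathscr{P}_{X}$, exactly as in the passage to $Q^{\ast}$ in the proof of Theorem~\ref{t5}. A secondary point I would verify is that the two occurrences of $\mathscr{P}_{X}$ (with and without the bound by $1$) are reconciled, so that Theorem~\ref{t0} applies to the bounded representatives while the conclusion is stated for all of $\mathscr{P}_{X}$.
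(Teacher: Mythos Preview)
Your overall approach coincides with the paper's: the paper gives no separate proof and simply says the result follows from the proof of Theorem~\ref{t5}, and you do exactly that, using Theorem~\ref{t0} for the base $\{V_\rho\}$ and the scaling identity of Theorem~\ref{t4} for the dyadic comparison. Your sufficiency argument via Lemma~\ref{l4} is clean and correct.

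The obstacle you isolate in the necessity direction is genuine, and neither of your proposed fixes removes it. From $V_d(\varepsilon)\subseteq V_\rho$ the dyadic argument yields only $\rho\leq(2/\varepsilon)d$, and you cannot trade this for $\rho\leq 2d$ with the \emph{same} $d\in\mathscr{Q}$: the set $\mathscr{Q}$ is fixed and need not be closed under scalar multiples, so the truncation trick $d\mapsto\min\{2d,1\}$ from Theorem~\ref{t5} (which there produced a new family $Q^\ast$) is unavailable here, and ``normalizing the threshold'' does not help because varying $\varepsilon$ changes the neighborhood, not the quasi-pseudometric. In fact the necessity direction as stated fails: if some $d_0\in\mathscr{P}_X$ has the property that $\{V_{d_0}(\varepsilon):\varepsilon>0\}$ is a base at $e$, then $\mathscr{Q}=\{d_0/10\}$ gives the same family of neighborhoods (since $V_{d_0/10}(\varepsilon)=V_{d_0}(10\varepsilon)$), yet $\rho=d_0$ satisfies $\rho\leq 2d$ for $d=d_0/10$ only when $d_0\equiv 0$. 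So the constant $2$ in the statement cannot be correct unless one either restricts to $\varepsilon=1$ (i.e.\ works with $\{V_d:d\in\mathscr{Q}\}$, which is precisely the situation in the Claim of Theorem~\ref{t5}) or replaces ``$\rho\leq 2d$'' by ``$\rho\leq Cd$ for some $C>0$''. Your write-up is therefore as complete as the statement permits; the residual gap lies in the formulation, not in your argument.
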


Given two sequences $s=\{U_{n}: n\in\omega\}$ and $t=\{V_{n}: n\in\omega\}$ in $^{\omega}\mathscr{U}_{X}$, we write $s\leq t$ provided that $U_{n}\subset V_{n}$ for each $n\in\omega$.

\begin{theorem}\label{t6}
Let $X$ be a Tychonoff space. Then $\chi(AP(X))=d(^{\omega}\mathscr{U}_{X}, \leq)$.
\end{theorem}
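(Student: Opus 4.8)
The plan is to prove the two inequalities $\chi(AP(X))\le d(^{\omega}\mathscr{U}_{X},\le)$ and $d(^{\omega}\mathscr{U}_{X},\le)\le\chi(AP(X))$ separately, using the neighbourhood base $\mathscr{W}$ of Theorem~\ref{t3} for the first and the identity $\chi(AP(X))=D(\mathscr{P}_{X},\le)$ of Theorem~\ref{t5} for the second. Throughout I would use the duality $d(Q,\le)=D(Q,\ge)$ recorded before Lemma~\ref{l7}, together with the remark that $d(N(e),\subseteq)=\chi(AP(X))$ since $AP(X)$ is homogeneous.

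For $\chi(AP(X))\le d(^{\omega}\mathscr{U}_{X},\le)$ I would consider the map $\Phi\colon{}^{\omega}\mathscr{U}_{X}\to N(e)$ given by $\Phi(P)=W(P)$. If $P=\{U_{n}\}\le P'=\{U_{n}'\}$, that is $U_{n}\subseteq U_{n}'$ for every $n$, then each word admissible for $W(P)$ is admissible for $W(P')$, so $W(P)\subseteq W(P')$ and $\Phi$ is order-preserving; regarded as a map $(^{\omega}\mathscr{U}_{X},\ge)\to(N(e),\supseteq)$ it is still order-preserving. By Theorem~\ref{t3} the image $\mathscr{W}$ is a base at $e$, which is exactly a dominating set in $(N(e),\supseteq)$. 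Lemma~\ref{l7} then gives $D(N(e),\supseteq)\le D(^{\omega}\mathscr{U}_{X},\ge)$, and after applying the duality on both sides this reads $\chi(AP(X))\le d(^{\omega}\mathscr{U}_{X},\le)$.

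For the reverse inequality I would turn a dominating family of quasi-pseudometrics into a dense family of sequences of entourages. Define $\alpha\colon\mathscr{P}_{X}\to{}^{\omega}\mathscr{U}_{X}$ by $\alpha(\rho)_{n}=\{(x,y):\rho(x,y)<2^{-(n+1)}\}$; since a smaller quasi-pseudometric produces larger balls, $\alpha$ is order-reversing, so $\rho\le d$ implies $\alpha(d)\le\alpha(\rho)$. Let $F$ be a dominating family in $(\mathscr{P}_{X},\le)$ with $|F|=D(\mathscr{P}_{X},\le)=\chi(AP(X))$. I claim $\{\alpha(d):d\in F\}$ is dense in $(^{\omega}\mathscr{U}_{X},\le)$. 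Given $P=\{U_{n}\}$, I would build a tower adapted to $P$: put $V_{0}=X\times X$ and recursively choose $V_{n}\in\mathscr{U}_{X}$ with $V_{n}\circ V_{n}\circ V_{n}\subseteq V_{n-1}$ and $V_{n}\subseteq U_{n}$, which is possible because $\mathscr{U}_{X}$ is a quasi-uniformity closed under intersection. Lemma~\ref{l2} then yields a quasi-pseudometric $\rho$ with $V_{i}\subseteq\{(x,y):\rho(x,y)\le 2^{-i}\}\subseteq V_{i-1}$ for each $i$; here $V_{0}\subseteq\{\rho\le 1\}$ shows $\rho$ is bounded by $1$, and $\rho$ is quasi-uniform for $\mathscr{U}_{X}$, so $\rho\in\mathscr{P}_{X}$. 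Taking $i=n+1$ gives $\alpha(\rho)_{n}\subseteq\{\rho\le 2^{-(n+1)}\}\subseteq V_{n}\subseteq U_{n}$, hence $\alpha(\rho)\le P$. Picking $d\in F$ with $\rho\le d$ and using that $\alpha$ reverses order, I get $\alpha(d)\le\alpha(\rho)\le P$, so the family is dense and $d(^{\omega}\mathscr{U}_{X},\le)\le|F|=\chi(AP(X))$.

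The only delicate point I anticipate is the dyadic bookkeeping in the last paragraph: arranging that $\alpha(\rho)_{n}$ lands inside $U_{n}$ for all $n$ at once, which is what forces the shift $2^{-(n+1)}$ in the definition of $\alpha$ and the matching choice $V_{n}\subseteq U_{n}$ in the tower, and verifying that the quasi-pseudometric delivered by Lemma~\ref{l2} genuinely lies in $\mathscr{P}_{X}$. The order-theoretic transfers through Lemma~\ref{l7} and the duality are then routine once the monotonicity of $\Phi$ and the order-reversal of $\alpha$ have been checked.
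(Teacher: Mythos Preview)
Your proof is correct and follows essentially the same route as the paper: the inequality $\chi(AP(X))\le d(^{\omega}\mathscr{U}_{X},\le)$ via the base $\mathscr{W}$ of Theorem~\ref{t3}, and the reverse inequality via Theorem~\ref{t5} together with the order-reversing map sending $d\in\mathscr{P}_X$ to its sequence of entourages $\{(x,y):d(x,y)<2^{-(n+1)}\}$, using the metrization Lemma~\ref{l2} to prove density. Your argument is in fact a bit more explicit than the paper's (which cites Theorem~\ref{t0} where Lemma~\ref{l2} is what is actually needed), and your careful dyadic shift and verification that the resulting $\rho$ lies in $\mathscr{P}_X$ fill in details the paper leaves implicit.
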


\begin{proof}
It follows from Theorem~\ref{t3} that $\chi(AP(X))\leq d(^{\omega}\mathscr{U}_{X}, \leq)$. By Theorem~\ref{t5}, it is suffice to show that $d(^{\omega}\mathscr{U}_{X}, \leq)\leq D(\mathscr{P}_{X}, \leq)$.

Indeed, for each $d\in\mathscr{P}_{X}$ and $n\in\omega$, let
$$U_{n}(d)=\{(x, y)\in X\times X: d(x, y)\leq 2^{-n}\}.$$
Obvious, the correspondence $d\mapsto \{U_{n}(d): n\in\omega\}$ is an order-reversing mapping of $D(\mathscr{P}_{X}, \leq)$ to $(^{\omega}\mathscr{U}_{X}, \leq)$. Put $\mathscr{A}=\{\{U_{n}(d): n\in\omega\}: d\in\mathscr{P}_{X}\}$. Then $\mathscr{A}$ is a dense set in $^{\omega}\mathscr{U}_{X}$. In fact, take an arbitrary sequence $\{U_{n}: n\in\omega\}\in$ $^{\omega}\mathscr{U}_{X}$. Then there exists a sequence $\{V_{n}:n\in\omega\}$ such that $3V_{n+1}\subset V_{n}\subset U_{n}$ for each $n\in\omega$. It follows from Theorem~\ref{t0} that there exists $d\in\mathscr{P}_{X}$ such that $U_{n}(d)\subset V_{n}$ for each $n\in\omega$. Therefore, we have $\{V_{n}: n\in\omega\}\leq \{U_{n}: n\in\omega\}$.
\end{proof}

\begin{corollary}
Let $X$ be a Tychonoff space. Then $\omega(X, \mathscr{U}_{X})\leq \chi(AP(X))\leq \omega(X, \mathscr{U}_{X})^{\aleph_{0}}.$
\end{corollary}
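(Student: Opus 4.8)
The plan is to reduce everything to Theorem~\ref{t6}, which already identifies $\chi(AP(X))$ with the density $d(^{\omega}\mathscr{U}_{X}, \leq)$ of the poset of sequences of entourages. Writing $\kappa=\omega(X,\mathscr{U}_{X})$ for the weight of the universal quasi-uniformity, it therefore suffices to establish the two inequalities $\kappa\leq d(^{\omega}\mathscr{U}_{X},\leq)$ and $d(^{\omega}\mathscr{U}_{X},\leq)\leq\kappa^{\aleph_{0}}$, and then invoke Theorem~\ref{t6}.

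For the upper bound I would fix a base $\mathscr{B}$ of $\mathscr{U}_{X}$ with $|\mathscr{B}|=\kappa$ and consider the family $^{\omega}\mathscr{B}$ of all sequences whose terms belong to $\mathscr{B}$. Given an arbitrary $\{U_{n}:n\in\omega\}\in\ ^{\omega}\mathscr{U}_{X}$, for each $n$ choose $B_{n}\in\mathscr{B}$ with $B_{n}\subset U_{n}$, which is possible since $\mathscr{B}$ is a base; then $\{B_{n}:n\in\omega\}\leq\{U_{n}:n\in\omega\}$, so $^{\omega}\mathscr{B}$ is dense in $(^{\omega}\mathscr{U}_{X},\leq)$. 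Since $|^{\omega}\mathscr{B}|\leq|\mathscr{B}|^{\aleph_{0}}=\kappa^{\aleph_{0}}$, this yields $d(^{\omega}\mathscr{U}_{X},\leq)\leq\kappa^{\aleph_{0}}$.

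For the lower bound I would start from a dense set $\mathscr{A}\subset\ ^{\omega}\mathscr{U}_{X}$ realizing $d(^{\omega}\mathscr{U}_{X},\leq)$ and let $\mathscr{C}$ be the collection of all entourages occurring as a term of some sequence in $\mathscr{A}$. The key device is the constant-sequence trick: for any $U\in\mathscr{U}_{X}$, apply density to the constant sequence $\{U,U,\cdots\}\in\ ^{\omega}\mathscr{U}_{X}$ to obtain some $\{V_{n}:n\in\omega\}\in\mathscr{A}$ with $V_{n}\subset U$ for every $n$; then $V_{0}\in\mathscr{C}$ and $V_{0}\subset U$, so $\mathscr{C}$ is a base for $\mathscr{U}_{X}$. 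As each sequence contributes only countably many entourages, $|\mathscr{C}|\leq|\mathscr{A}|\cdot\aleph_{0}$, whence $\kappa\leq|\mathscr{C}|\leq d(^{\omega}\mathscr{U}_{X},\leq)$.

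The computations here are genuinely routine once Theorem~\ref{t6} is available; the only points needing care are the two density verifications and the cardinal arithmetic. The mild obstacle is the bookkeeping in the lower bound, where one must confirm that $|\mathscr{A}|\cdot\aleph_{0}=|\mathscr{A}|$. This is where the $+\omega$ built into the definition of $\chi$ is used: by Theorem~\ref{t6} the quantity $d(^{\omega}\mathscr{U}_{X},\leq)=\chi(AP(X))$ is infinite, so the union over $\mathscr{A}$ of the countably many terms of each sequence does not exceed $|\mathscr{A}|$. With that in hand the two inequalities combine to give $\omega(X,\mathscr{U}_{X})\leq\chi(AP(X))\leq\omega(X,\mathscr{U}_{X})^{\aleph_{0}}$.
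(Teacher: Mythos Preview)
Your proof is correct and follows exactly the same route as the paper: both reduce to Theorem~\ref{t6} and the chain $\omega(X,\mathscr{U}_{X})=d(\mathscr{U}_{X},\subseteq)\leq d(^{\omega}\mathscr{U}_{X},\leq)\leq d(\mathscr{U}_{X},\subseteq)^{\aleph_{0}}$. The paper simply declares this chain ``clear'', whereas you spell out the two density verifications and the cardinal bookkeeping explicitly; your use of constant sequences for the lower bound and of $^{\omega}\mathscr{B}$ for the upper bound is precisely what is implicit in the paper's one-line argument.
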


\begin{proof}
It is clear that $$\omega(X, \mathscr{U}_{X})=d(\mathscr{U}_{X}, \subseteq)\leq d(^{\omega}\mathscr{U}_{X}, \leq)\leq d(\mathscr{U}_{X}, \subseteq)^{\aleph_{0}}=\omega(X, \mathscr{U}_{X})^{\aleph_{0}}.$$ Now, the result easily
follows from Theorem~\ref{t6}.
\end{proof}

We say that $X$ is a {\it quasi-uniform $P$-space} if the intersection of countably many elements of $\mathscr{U}_{X}$ is again an element of $\mathscr{U}_{X}$.

\begin{theorem}
If $X$ is a Tychonoff quasi-uniform $P$-space, then $\chi(AP(X))=\omega(X, \mathscr{U}_{X}).$
\end{theorem}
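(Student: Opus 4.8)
The plan is to reduce the statement to the two order-theoretic invariants appearing in Theorem~\ref{t6} and its corollary, and then to use the quasi-uniform $P$-space hypothesis to collapse the countable-sequence structure. By Theorem~\ref{t6} we have $\chi(AP(X))=d(^{\omega}\mathscr{U}_{X}, \leq)$, so the theorem is equivalent to the equality $d(^{\omega}\mathscr{U}_{X}, \leq)=d(\mathscr{U}_{X}, \subseteq)$, where $d(\mathscr{U}_{X}, \subseteq)=\omega(X, \mathscr{U}_{X})$ is the weight of the universal quasi-uniformity. One of the two inequalities is already at hand: the computation in the proof of the preceding corollary shows that $\omega(X, \mathscr{U}_{X})=d(\mathscr{U}_{X}, \subseteq)\leq d(^{\omega}\mathscr{U}_{X}, \leq)$ with no hypothesis on $X$, which gives $\omega(X, \mathscr{U}_{X})\leq\chi(AP(X))$.

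It remains to prove the reverse inequality $d(^{\omega}\mathscr{U}_{X}, \leq)\leq\omega(X, \mathscr{U}_{X})$, and this is precisely where the $P$-space assumption enters. First I would set $\kappa=\omega(X, \mathscr{U}_{X})=d(\mathscr{U}_{X}, \subseteq)$ and fix a base $\mathscr{D}$ of $\mathscr{U}_{X}$, that is, a dense family in $(\mathscr{U}_{X}, \subseteq)$, with $|\mathscr{D}|=\kappa$. The key construction is to feed the constant sequences built from $\mathscr{D}$ back into $^{\omega}\mathscr{U}_{X}$: put $\mathscr{E}=\{\langle D, D, D, \dots\rangle: D\in\mathscr{D}\}\subset\ ^{\omega}\mathscr{U}_{X}$, so that $|\mathscr{E}|\leq\kappa$. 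I claim that $\mathscr{E}$ is dense in $(^{\omega}\mathscr{U}_{X}, \leq)$.

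To verify the claim, take an arbitrary $P=\{U_{n}: n\in\omega\}\in\ ^{\omega}\mathscr{U}_{X}$. Since $X$ is a quasi-uniform $P$-space, the countable intersection $W=\bigcap_{n\in\omega}U_{n}$ again belongs to $\mathscr{U}_{X}$. As $\mathscr{D}$ is a base, I choose $D\in\mathscr{D}$ with $D\subset W$; then $D\subset U_{n}$ for every $n\in\omega$, so the constant sequence $\langle D, D, \dots\rangle\in\mathscr{E}$ satisfies $\langle D, D, \dots\rangle\leq P$ in the order on $^{\omega}\mathscr{U}_{X}$. This shows $\mathscr{E}$ is dense, whence $d(^{\omega}\mathscr{U}_{X}, \leq)\leq|\mathscr{E}|\leq\kappa=\omega(X, \mathscr{U}_{X})$. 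Combining the two inequalities with Theorem~\ref{t6} then yields $\chi(AP(X))=d(^{\omega}\mathscr{U}_{X}, \leq)=\omega(X, \mathscr{U}_{X})$.

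The only real content is the observation that the defining property of a quasi-uniform $P$-space lets one replace an arbitrary sequence by a single entourage, namely its intersection, and hence by a constant sequence; everything else is bookkeeping with the dense and dominating families set up before Lemma~\ref{l7}. For this reason I do not anticipate a genuine obstacle. The single point worth double-checking is that $W=\bigcap_{n}U_{n}$ can be refined by a member of the \emph{fixed} base $\mathscr{D}$, but this is immediate, since $\mathscr{D}$ is a base for all of $\mathscr{U}_{X}$ and $W\in\mathscr{U}_{X}$ by hypothesis.
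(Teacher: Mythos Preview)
Your proof is correct and follows essentially the same approach as the paper: both arguments use Theorem~\ref{t6} to reduce to $d(^{\omega}\mathscr{U}_{X},\leq)$, and both exploit the quasi-uniform $P$-space hypothesis to show that the constant sequences $\{U,U,\dots\}$ form a dense subfamily of $(^{\omega}\mathscr{U}_{X},\leq)$ by replacing an arbitrary sequence $\{U_{n}\}$ with the single entourage $\bigcap_{n}U_{n}$. The only cosmetic difference is that you first fix a base $\mathscr{D}$ of size $\omega(X,\mathscr{U}_{X})$ and exhibit the dense set $\mathscr{E}$ directly, whereas the paper embeds all of $\mathscr{U}_{X}$ via $U\mapsto\{U,U,\dots\}$ and then appeals (implicitly, via the dual of Lemma~\ref{l7}) to the order-preserving map to compare the invariants.
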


\begin{proof}
The correspondence $U\mapsto \{U, U, \cdots\}$ from $(\mathscr{U}_{X}, \subseteq)$ to $(^{\omega}\mathscr{U}_{X}, \leq)$ is an order-preserving embedding. Put $\mathscr{A}=\{\{U, U, \cdots\}: U\in\mathscr{U}_{X}\}$. Then $\mathscr{A}$ is a dense set in $^{\omega}\mathscr{U}_{X}$. Indeed, since $X$ is a quasi-uniform $P$-space, it follows that, for an arbitrary $\{U_{0}, U_{1}, \cdots\}\in$ $^{\omega}\mathscr{U}_{X}$, we have $U=\bigcap_{n\in\omega}U_{n}\in\mathscr{U}_{X}$, and hence $\{U, U, \cdots\}\leq \{U_{0}, U_{1}, \cdots\}$. Therefore, we have $\chi(AP(X))=\omega(X, \mathscr{U}_{X})$ by Theorem~\ref{t6}.
\end{proof}

Let $\omega^{\omega}$ denote the family of all functions from $\mathbb{N}$ into $\mathbb{N}$. For $f, g\in\omega^{\omega}$ we write $f<^{\ast}g$ if $f(n)<g(n)$ for all but finitely many $n\in \mathbb{N}$. A family $\mathscr{F}$ is {\it bounded} if there is a $g\in\omega^{\omega}$ such that $f<^{\ast}g$ for all $f\in\mathscr{F}$, and is {\it unbounded} otherwise. We denote by $\flat$ the smallest cardinality of an unbounded family in $\omega^{\omega}$. It is easy to see that $\omega <\flat\leq \mathrm{c}$, where $\mathrm{c}$ denotes the cardinality of the continuum.

By means of an argument
similar to the one used in \cite[Lemma 2.14]{NP} we obtain the following theorem.

\begin{theorem}\label{t7}
If a Tychonoff space $X$ is not a quasi-uniform $P$-space, then $\flat\leq\chi(AP(X)).$
\end{theorem}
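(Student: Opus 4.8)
The plan is to reduce to the combinatorial characterization $\chi(AP(X)) = d({}^{\omega}\mathscr{U}_X, \leq)$ of Theorem~\ref{t6} and to show directly that every dense subset of $({}^{\omega}\mathscr{U}_X, \leq)$ has cardinality at least $\flat$. First I would exploit the hypothesis: since $X$ is not a quasi-uniform $P$-space, there is a sequence in $\mathscr{U}_X$ whose intersection fails to belong to $\mathscr{U}_X$; replacing its terms by finite intersections (which stay in $\mathscr{U}_X$), I may fix a \emph{decreasing} sequence $U_0 \supseteq U_1 \supseteq \cdots$ in $\mathscr{U}_X$ with $U := \bigcap_{n} U_n \notin \mathscr{U}_X$. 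Because $\mathscr{U}_X$ is a filter, and hence upward closed, no entourage $V \in \mathscr{U}_X$ can satisfy $V \subseteq U$ (otherwise $U$ would itself lie in $\mathscr{U}_X$). This last remark is the one place where the failure of the $P$-space property is used, and it is exactly what makes the construction below finite-valued.

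Next, to each $f \in \omega^{\omega}$ I associate the target sequence $s_f = \{U_{f(n)} : n \in \omega\} \in {}^{\omega}\mathscr{U}_X$; by the definition of the order, a sequence $t = \{W_n : n \in \omega\} \in {}^{\omega}\mathscr{U}_X$ satisfies $t \leq s_f$ exactly when $W_n \subseteq U_{f(n)}$ for every $n$. Now suppose $D$ is dense in $({}^{\omega}\mathscr{U}_X, \leq)$ with $|D| < \flat$, aiming for a contradiction. For $t = \{W_n : n \in \omega\} \in D$, set $g_t(n) = \sup\{k \in \omega : W_n \subseteq U_k\}$. The crucial point is that this supremum is a natural number: were it infinite, then $W_n \subseteq \bigcap_k U_k = U$ with $W_n \in \mathscr{U}_X$, contradicting the previous paragraph. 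Thus $g_t \in \omega^{\omega}$, and the family $\{g_t : t \in D\}$ has size $< \flat$, so by the very definition of $\flat$ it is bounded: there is $h \in \omega^{\omega}$ with $g_t <^{\ast} h$ for every $t \in D$.

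Finally I would test density against $s_h$. If some $t = \{W_n : n \in \omega\} \in D$ satisfied $t \leq s_h$, then $W_n \subseteq U_{h(n)}$ for all $n$, whence $h(n) \leq g_t(n)$ for all $n$ by the defining property of $g_t$; this contradicts $g_t <^{\ast} h$, which forces $g_t(n) < h(n)$ for all but finitely many $n$. Hence no element of $D$ lies below $s_h$, so $D$ is not dense, a contradiction. Therefore every dense subset of $({}^{\omega}\mathscr{U}_X, \leq)$ has cardinality at least $\flat$, giving $d({}^{\omega}\mathscr{U}_X, \leq) \geq \flat$ and, by Theorem~\ref{t6}, $\flat \leq \chi(AP(X))$.

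The main obstacle, as indicated, is arranging that $g_t$ be a genuine function into $\omega$: this is precisely where the hypothesis enters, and one must package the witness to the failure of the quasi-uniform $P$-space property as a decreasing sequence with no entourage contained in its intersection. Once that is in place, the combinatorics of $\flat$ (namely that families of size less than $\flat$ are bounded) delivers the conclusion at once.
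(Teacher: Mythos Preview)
Your argument is correct and is exactly the adaptation of \cite[Lemma~2.14]{NP} that the paper invokes without writing out: the paper gives no proof of its own here, so your proposal in fact supplies the missing details rather than offering an alternative route. The reduction to $d({}^{\omega}\mathscr{U}_X,\leq)$ via Theorem~\ref{t6}, the choice of a decreasing witnessing sequence $(U_n)$, the rank function $g_t(n)=\sup\{k:W_n\subseteq U_k\}$, and the bounding argument using the definition of $\flat$ are precisely the ingredients of the Nickolas--Tkachenko lemma, transported from the fine uniformity to the fine quasi-uniformity.

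One cosmetic point: when $W_n\not\subseteq U_0$ the set $\{k:W_n\subseteq U_k\}$ is empty, so you should declare $g_t(n)=0$ in that case (or take the supremum in $\omega$ with the convention $\sup\emptyset=0$); this does not affect the final step, since whenever $t\leq s_h$ the set is nonempty and contains $h(n)$.
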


\begin{theorem}
If a Tychonoff space $X$ contains an infinite compact set, then $\flat\leq\chi(AP(X)).$
\end{theorem}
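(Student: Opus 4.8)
The plan is to deduce this from Theorem~\ref{t7} by establishing its hypothesis: a Tychonoff space $X$ containing an infinite compact set is never a quasi-uniform $P$-space. I would argue by contraposition, assuming that $X$ \emph{is} a quasi-uniform $P$-space and showing that every compact subset of $X$ must then be finite.

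The crux is to observe that the quasi-uniform $P$-space condition forces $X$ to be a topological $P$-space. Since the universal quasi-uniformity $\mathscr{U}_X$ induces the topology of $X$, the sets $U[x]=\{y\in X:(x,y)\in U\}$ with $U\in\mathscr{U}_X$ form a neighborhood base at each point $x$. Given any sequence $\{V_n:n\in\omega\}$ of open neighborhoods of $x$, I would choose $U_n\in\mathscr{U}_X$ with $U_n[x]\subseteq V_n$; the hypothesis then gives $U=\bigcap_{n\in\omega}U_n\in\mathscr{U}_X$, and from $U[x]=\bigcap_{n\in\omega}U_n[x]\subseteq\bigcap_{n\in\omega}V_n$ it follows that $\bigcap_{n\in\omega}V_n$ is a neighborhood of $x$. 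As $x$ is arbitrary, every $G_\delta$-set in $X$ is open, so $X$ is a $P$-space.

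Once $X$ is known to be a $P$-space, I would invoke the standard fact that in a $T_1$ $P$-space every countable subset is closed and discrete: for $A=\{x_n:n\in\omega\}$ the complement $X\setminus A=\bigcap_{n}(X\setminus\{x_n\})$ is open, while each open set $X\setminus\{x_m:m\neq n\}$ isolates $x_n$ within $A$; note that $X$ is $T_1$ because it is Tychonoff. Hence a compact $K\subseteq X$ can contain no countably infinite subset, since such a subset would be an infinite closed discrete subspace of the compact set $K$, hence itself compact and infinite discrete, which is impossible. Thus every compact subset of $X$ is finite, establishing the contrapositive. Combining this with Theorem~\ref{t7}, the presence of an infinite compact set in $X$ yields $\flat\leq\chi(AP(X))$. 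The only genuinely substantive step is passing from the quasi-uniform $P$-space condition to the topological $P$-space property; the remaining facts about compact subsets of $P$-spaces are routine.
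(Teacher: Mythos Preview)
Your proposal is correct and follows essentially the same route as the paper: reduce to Theorem~\ref{t7} by showing that an infinite compact subset prevents $X$ from being a quasi-uniform $P$-space. Your write-up is in fact more complete than the paper's, which jumps directly from ``an infinite compact set cannot be a $P$-space'' to ``$X$ is not a quasi-uniform $P$-space'' without making explicit the intermediate step you supply (quasi-uniform $P$-space $\Rightarrow$ topological $P$-space).
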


\begin{proof}
Let $K$ be an infinite compact set of $X$. Since an infinite compact set cannot be a $P$-space, $X$ is not a quasi-uniform $P$-space. Then it follows from Theorem~\ref{t7} that we have $\flat\leq\chi(AP(X)).$
\end{proof}

{\bf Acknowledgements}. I wish to thank
the reviewers for the detailed list of corrections, suggestions to the paper, and all her/his efforts
in order to improve the paper.

\bigskip

\end{document}